\newcommand{\R}{{\mathbb R}}
\newcommand{\C}{{\mathbb C}}
\newcommand{\be}{\begin{eqnarray}}
\newcommand{\ben}{\begin{eqnarray*}}
\newcommand{\en}{\end{eqnarray}}
\newcommand{\enn}{\end{eqnarray*}}
\newcommand{\ba}{\backslash}
\newcommand{\pa}{\partial}
\newcommand{\ov}{\overline}
\newcommand{\G}{\Gamma}
\newcommand{\wi}{\widetilde}
\newcommand{\hx}{\hat{x}}
\newtheorem{theorem}{Theorem}[section]
\newtheorem{corollary}[theorem]{Corollary}
\begin{document}
\title{\bf Uniqueness and modified Newton method for cracks from the far field patterns with a fixed incident direction}
\author{
Jialei Li\thanks{School of Mathematical Sciences, University of Chinese Academy of Sciences, Beijing 100049, China, and Academy of Mathematics and Systems Science,
Chinese Academy of Sciences, Beijing 100190, China. Email: lijialei21@mails.ucas.ac.cn}
,\and
Xiaodong Liu\thanks{Academy of Mathematics and Systems Science,
Chinese Academy of Sciences, Beijing 100190, China. Email: xdliu@amt.ac.cn}
}
\date{}
\maketitle

\begin{abstract}
We consider the inverse cracks scattering problems from the far field patterns with a fixed incident direction. We firstly show that the sound-soft cracks can be uniquely determined by the multi-frequency far field patterns with a fixed incident direction. The proof is based on a low frequency asymptotic analysis of the scattered field. One important feature of the uniqueness result is that the background can even be an unknown inhomogeneous medium.
A modified Newton method is then proposed for the numerical reconstruction of the shapes and locations of the cracks.
Compared to the classical Newton method, the modified Newton method relaxes the dependence of a good initial guess and can be applied for multiple cracks.
Numerical examples in two dimensions are presented to demonstrate the feasibility and effectiveness of the modified Newton method.
In particular, the quality of the reconstructions can be greatly improved if we use the measurements properly with two frequencies or two incident directions.

\vspace{.2in}
{\bf Keywords:} Inverse scattering problem, cracks identification, multi-frequency data, Newton method.

\vspace{.2in} {\bf AMS subject classifications:} 35R30, 35P25

\end{abstract}

\section{Introduction}
\setcounter{equation}{0}
The inverse crack scattering problem plays an important role in detecting cracks hidden in diverse structures such as buildings, bridges or other concrete structure. The unknown object is a collection $\Sigma = \{\Gamma_1, \Gamma_2, \cdots, \Gamma_n\}$ of $n$ cracks.  Every crack $\Gamma_j$ is an one-dimensional connected sub-manifold of $\mathbb{R}^2$ with
the parametric representation
\ben
z_j: (-1,1)\rightarrow\mathbb{R}^2,
\enn
which is an injective and twice continuously differentiable function such that $z_j'(s)\neq 0$ for all $s\in(-1,1)$, $j=1,2,\cdots, n$.
Here and throughout the paper, we suppose any two cracks have positive distance such that the complement $\mathbb{R}^2\backslash \overline{\Sigma}$ is connected.
Given the plane wave $u^i=e^{ikx\cdot d}$ with the frequency $k>0$ and the incident direction $d\in S^1:=\{x\in \mathbb{R}^2\,|\,|x|=1\}$, the scattering of $u^i$ by sound-soft cracks is formulated as
\begin{equation}
\label{eq-scattering}
    \left\{
\begin{aligned}
    &\Delta u + k^2 u = 0 &\hbox{in }\mathbb{R}^2\backslash \overline{\Sigma} \\
    &u = 0                 &\hbox{on } \Sigma \\
    & \lim_{r\to\infty} \sqrt{r}\left(\frac{\partial u^s }{\partial r} - ik u^s \right)=0
\end{aligned}
\right.,
\end{equation}
where $u$ and $u^s:= u- u^i$ are called the total field and the scattered field, respectively.
The limit in \eqref{eq-scattering} is the well known Sommerfeld radiating condition, which holds uniformly for all directions $\hx:=x/|x|\in S^1$.
From this, the scattered field $u^s$ has an asymptotic behavior of the form
\ben
u^s(x;k) =\frac{e^{ik|x|}}{\sqrt{|x|}}\left\{
u^\infty(\hat{x};k)+O\left(\frac{1}{|x|}\right)
\right\},\quad \mbox{as } |x|\to \infty,
\enn
uniformly in all directions $\hat{x}\in S^1$. The function $u^\infty$, defined on the unit circle $S^1$, is known as the the far field pattern of $u^s$.
The inverse crack problem is to recover the cracks $\Sigma$ from a knowledge of the scattered field $u^s$ or the corresponding far field pattern $u^\infty$.

Since the pioneer work \cite{Kress95} by Kress for a sound soft crack in 1995, numerous techniques and numerical methods have been proposed to solve this problem. The inverse cracks scattering problem is extended to various boundary conditions, for example, the sound hard case \cite{Monch97}, the impedance case \cite{KressLee03}, and the mixed-type case \cite{CacoColtMonk01,KruLiuSini08,Yan08}. Furthermore, the results are also applicable to other problem settings, such as the elastic wave equation \cite{Kress96,NakaUhlmWang03}, inverse conductivity problem \cite{AlvaDIM09}, electromagnetic wave equation \cite{Park14}, and others discussed in \cite{GaoDM18,HauptmannIIS19,KressVinto08,Valtul04}.

The numerical methods for cracks can be categorized into non-iterative schemes and iterative schemes. Non-iterative schemes, such as the factorization method \cite{KirschRitter00, XuMH24}, linear sampling method \cite{CakoniColton03}, direct sampling method \cite{Park18}, MUltiple SIgnal Classification (MUSIC) type method \cite{AmmariKLP10,AmmariGKP11}, offer fast detection without the need of an initial guess, although the imaging resolution is limited. On the other hand, iterative schemes, such as Newton's method \cite{Kress95,Kress96,Monch97}, hybrid method \cite{KressSerra05}, two-step method \cite{Lee10}, and level-set method \cite{AlvaDIM09}, can give a better reconstruction with the price of a good initial guess.

Following the idea of Isakov \cite{Isakov90}, it is shown in \cite{CacoColtMonk01,Kress95,KressLee03,KruLiuSini08,Monch97,Yan08} that the cracks can be uniquely determined from the far field patterns for all incident directions at a fixed frequency.
The main technique in the proof is to construct a sequence of solutions with a singularity moving towards a boundary point, which belongs to one crack but does not belong to the other. To do so, one has to use the far field patterns for all incident directions. Unfortunately, this is not practical since it is too expensive to retrieve so much data.
In contrast, we are interested in deriving uniqueness results from the far field patterns with a fixed incident direction, in which Isakov's technique is not applicable. If it is assumed that the sound soft cracks are line segments, then a single incident plane wave is sufficient to uniquely determine the cracks as established by Liu and Zou \cite{LiuZou06}. The proof is based on the reflection principle for the solution of the Helmholtz equation. We refer to \cite{HuLiu14} for the uniqueness if the incident field is a point source.
However, the uniqueness of general cracks from the far field patterns with a single incident direction is still a challenging open problem.

The first contribution of this paper is, for the first time, to establish a uniqueness result for general sound soft cracks from the multi-frequency far field patterns with a fixed incident direction. The proof is based on a low frequency expansion of the scattered fields,  which is motivated by the techniques in \cite{LiuLiu17} for obstacles. One important feature of this uniqueness result is that the background can even be an unknown inhomogeneous medium.  In \cite{Kress99}, Kress studied the low frequency asymptotics for the scattered field via a single-layer integral equation approach. Consequently, Kress showed that the solutions to the Dirichlet problem for the Helmholtz equation converge to a solution of the Dirichlet problem for the Laplace equation as the frequency tends to zero provided the boundary values converge. Following \cite{Kress99,LiuLiu17}, we
give an explicit low frequency expansion of the total field, which is then used to prove the uniqueness of the cracks. 
Note that, in the uniqueness proof of obstalces\cite{LiuLiu17}, an important result is that two distinct obstacles $D_1$ and $D_2$ always give rise to a nonempty domain $D_1\backslash \overline{D_2}$ (or similarly $D_2\backslash \overline{D_1}$). Unfortunately, such a domain can not be generated by two distinct cracks. We prove for the first time that uniqueness of the inverse problem  remains valid for cracks. The key step is an argument of the maximal principle of certain term of the low frequency expansion.

%

The second contribution of this paper is a modified Newton method that updates all coordinates. In this way, compared with the Newton method proposed in \cite{Kress95} and the hybrid method in \cite{KressSerra05}, our method is more flexible and can avoid the self-intersecting curves during the iterations. Numerical experiments verify the effectiveness and robustness in various configurations. To improve the reconstruction quality and reduce the reliance on the initial guess, we introduce a proper choice of measurements with two frequencies or two incident directions. In recent years, the inverse scattering problem from multi-frequency data has been extensively investigated for various scattering objects, including source \cite{BaoLRX15,LiLiu23,LiLiuShi23}, medium \cite{BaoTriki10}, and obstacle \cite{LiuLiu17}. For inverse cracks scattering, multi-frequency data is only used in some non-iterative numerical methods \cite{AmmariGKP11,Park14}.


The paper is structured as follows. In Section 2, we analyze the low frequency behavior of the total field and establish the uniqueness of the inverse problem. Section 3 is devoted to the modified Newton method. In Section 4, numerical examples are presented to demonstrate the feasibility and robustness of the modified Newton method. Finally in the Appendix, we discuss about the uniqueness of the impedance cracks, including a counterexample of the cracks with the impedance boundary conditions, and show the uniqueness of sound soft cracks in an unknown inhomogeneous medium.

\section{Uniqueness for the inverse problem with multi-frequency data of one incident direction}
\setcounter{equation}{0}
In this section, we prove the uniqueness of the inverse problem as stated in the following Theorem.

\begin{theorem}
\label{thm-unique}
Assume $\Sigma$ and $\hat{\Sigma}$ are two collections of sound soft cracks which produce the same far field patterns for incident fields $u^i = e^{ik x\cdot d}$ with all $k \in \{k_j>0|j = 1,2,\cdots, k_j \to 0\}$ and a fixed $d\in S^1$. Then $\Sigma=\hat{\Sigma}$.
\end{theorem}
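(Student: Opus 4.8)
The plan is to prove uniqueness by a low-frequency asymptotic analysis, following the strategy used for obstacles in \cite{LiuLiu17} but adapting it to the crack geometry. The starting point is the explicit low-frequency expansion of the total field announced earlier in the introduction. Since $\Sigma$ and $\hat\Sigma$ produce identical far field patterns $u^\infty(\hat x;k_j)$ for the fixed incident direction $d$ and for a sequence of frequencies $k_j\to 0$, Rellich's lemma and the analyticity of the scattered field in the unbounded connected complement show that the two scattered fields $u^s$ and $\hat u^s$ coincide in the exterior of any ball containing both crack collections, hence (by unique continuation in the connected set $\mathbb R^2\setminus(\overline\Sigma\cup\overline{\hat\Sigma})$) they coincide throughout that common exterior region, for every $k_j$ in the sequence.

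First I would extract, term by term, the coefficients of the low-frequency expansion of $u=u^i+u^s$ in powers of $k$ (including a logarithmic term $\log k$ characteristic of the two-dimensional Helmholtz equation). Because the total fields associated with $\Sigma$ and with $\hat\Sigma$ agree on the common exterior for all $k_j\to 0$, the corresponding expansion coefficients must agree there as well; each leading coefficient solves a Dirichlet problem for the Laplace equation in the exterior of the respective crack collection, with the boundary condition $u=0$ on the cracks inherited from \eqref{eq-scattering}. This reduces the problem to a purely potential-theoretic statement: the harmonic limit functions $v$ (for $\Sigma$) and $\hat v$ (for $\hat\Sigma$) agree outside both crack systems, each vanishes on its own cracks, and I must conclude $\Sigma=\hat\Sigma$.

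Next, arguing by contradiction, I would assume $\Sigma\neq\hat\Sigma$ and produce a point $x_0$ lying on one crack, say $x_0\in\Sigma$, with $x_0\notin\hat\Sigma$, together with a small neighborhood meeting $\mathbb R^2\setminus\overline{\hat\Sigma}$ on which $\hat v$ is harmonic and smooth. On this neighborhood $v=\hat v$ away from $\Sigma$, so $\hat v$ inherits the vanishing of $v$ along the portion of $\Sigma$ inside the neighborhood. The crucial difficulty — and the place where the obstacle argument of \cite{LiuLiu17} breaks down — is that two distinct cracks do not enclose a genuine two-dimensional region $D_1\setminus\overline{D_2}$; a crack is codimension one, so one cannot simply exploit a nonempty open difference domain. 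My remedy, as flagged in the introduction, is a maximum-principle argument applied to the relevant expansion coefficient: in a small disk around $x_0$ punctured along $\Sigma$, the harmonic function $\hat v$ attains a boundary value $0$ along $\Sigma$ from both sides while being harmonic across $\Sigma$ (since $x_0\notin\hat\Sigma$), which forces $\hat v\equiv 0$ on a full two-dimensional neighborhood by unique continuation, and then $v\equiv 0$ there as well.

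Finally I would propagate this contradiction globally. The vanishing of the harmonic coefficient $v$ on an open subset of the connected exterior $\mathbb R^2\setminus\overline\Sigma$, combined with unique continuation, would force $v$ to vanish identically in the exterior of $\Sigma$; tracking this back through the low-frequency expansion contradicts the prescribed behavior of the total field coming from the incident plane wave $e^{ikx\cdot d}$ (whose own low-frequency limit is the nonzero constant $1$), giving the desired contradiction and hence $\Sigma=\hat\Sigma$. I expect the main obstacle to be exactly the maximum-principle/unique-continuation step that replaces the missing difference domain: one must verify carefully that $\hat v$ is genuinely harmonic across the piece of $\Sigma$ near $x_0$ (using $x_0\notin\hat\Sigma$ and the regularity of single-layer potentials away from their own support), and that the two-sided vanishing of $\hat v$ on a real-analytic arc indeed yields vanishing on a full open set rather than merely on the arc. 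Handling the logarithmic term in the two-dimensional expansion correctly, so that the limiting boundary value problems are the right ones, is the remaining technical point to be checked.
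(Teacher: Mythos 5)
Your setup matches the paper's: Rellich's lemma plus analyticity gives equality of the total fields in the unbounded component $G$ of $\mathbb{R}^2\setminus(\overline{\Sigma}\cup\overline{\hat\Sigma})$, and the coefficient $v$ of the $1/\ln k$ term in the low-frequency expansion is harmonic in $\mathbb{R}^2\setminus\overline{\Sigma}$ with $v=0$ on $\Sigma$ (likewise $\hat v$ for $\hat\Sigma$), with $v=\hat v$ in $G$. The gap is in your contradiction step. You claim that because $\hat v$ is harmonic across the arc of $\Sigma$ near $x_0\in\Sigma\setminus\hat\Sigma$ and vanishes on that arc from both sides, it must vanish on a full two-dimensional neighborhood ``by unique continuation.'' That is false: $\mathrm{Im}\,z$ is harmonic across the real axis and vanishes on it without vanishing anywhere else. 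Two-sided vanishing of a continuous function on an arc is simply vanishing on the arc; to conclude vanishing on an open set you would also need the normal derivative of $\hat v$ to vanish there, and nothing in your argument supplies that. You flag this as a point ``to be checked,'' but it is not a technicality --- it is the step on which the whole proof turns, and as written it does not go through.

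The paper closes this gap with a global, not local, maximum-principle argument, and the ingredient you are missing is the behavior of $v$ at infinity: the explicit formula \eqref{eq-uniq-u} gives $v(x)=\frac{1}{2\pi}\ln|x|+O(1)$ as $|x|\to\infty$. Since $v$ is harmonic in $\mathbb{R}^2\setminus\overline{\Sigma}$, vanishes on $\Sigma$, and tends to $+\infty$ at infinity, the minimum principle yields the strict positivity $v>0$ throughout $\mathbb{R}^2\setminus\overline{\Sigma}$. Then any point of $\hat\Sigma\setminus\overline{\Sigma}$ lies in that open set yet satisfies $v=\hat v=0$ by \eqref{v=hatv}, an immediate contradiction --- no continuation of $\hat v$ across $\Sigma$ is needed. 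The same logarithmic growth also disposes of a case your sketch does not address: if $\mathbb{R}^2\setminus(\overline{\Sigma}\cup\overline{\hat\Sigma})$ had a bounded component (a cavity where Rellich's lemma gives no information), the paper applies the maximum principle there, where $v=0$ on the whole boundary, to get $v\equiv 0$ on that component and hence, by unique continuation from an open set, everywhere --- contradicting the growth at infinity. So the two things to add to your proposal are precisely the asymptotics $v\sim\frac{1}{2\pi}\ln|x|$ and the resulting strict positivity of $v$ off $\Sigma$; the local two-sided-vanishing argument cannot substitute for them.
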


To do so, we recall an integral representation of the scattered field $u^s$ and analyze its low frequency asymptotic behavior. Note that the more complex inhomogeneous background case will be presented in the Appendix.

For any $y=z_j(s)\in \Gamma_j, s\in (-1,1)$, by the bijectivity of the cosine function from $(0,\pi)$ to $(-1, 1)$, we may take $t\in (0, \pi)$ such that $s=\cos t$ and therefore $y=z_j(\cos t), \, t\in (0, \pi)$. Consequently, we define a weighted functional space $\mathcal{X}\subset L^p(\Sigma),\,1<p<4/3$ by
\begin{equation*}
\mathcal{X} := \left\{
\phi(y)\in L^p(\Sigma)\,\Big|\,\phi(z_j(\cos t))= \frac{\psi_j(t)}{|z_j'(\cos t)| \sin t}\, \mbox{for}\, \, \psi_j(t)\in L^2(0,\pi),\, 1\leq j\leq n\right\}.
\end{equation*}
Such a space is naturally derived from the cosine transformation, we refer to \cite{CakoniColton03,Kress95} for more details.
Given a density function $\phi\in\mathcal{X}$, we define the single layer potential $\mathcal{S}_k:\mathcal{X}\to H^1_{loc}(\mathbb{R}^2\backslash \overline{\Sigma})$ by
\ben
( \mathcal{S}_k\phi) (x) := \int_\Sigma \Phi_k(x,y) \phi(y) {\rm d}y,\quad x\in \mathbb{R}^2\backslash\overline{\Sigma},
\enn
where
\ben
\Phi_k(x,y) = \frac{i}{4}H^{(1)}_0(k|x-y|),\quad x\in\R^2\ba\{y\},
\enn
is the fundamental solution to the Helmholtz equation in terms  of the Hankel function $H^{(1)}_0$ of the first kind and of order zero.
The restriction of $\mathcal{S}_k$ to the cracks $\Sigma$ is denoted by $\mathrm{S}_k:\mathcal{X}\to H^{1}(\Sigma)$, which is well defined and admits a bounded inverse from $H^{1}(\Sigma)$ to $\mathcal{X}$ \cite{Kress99}.
To study the low frequency behavior of the scattered field, following Kress \cite{Kress99}, we introduce a boundary operator $W: \mathcal{X}\rightarrow \mathcal{X}$ given by
\ben
(W\phi)(x) = \phi(x) - \frac{1}{|\Sigma|}\int_\Sigma  \phi(y) {\rm d}y,  \quad x\in \Sigma.
\enn
Here, $|\Sigma| = \sum_{j=1}^n |\Gamma_j|$ is the total length of all cracks. Then we have the following representation of the total field $u$.
\begin{theorem}
\label{thm-int-representation2}
Let $u\in H^1_{loc}(\mathbb{R}^2\backslash\overline{\Sigma})$ be a solution to \eqref{eq-scattering} for incident field $u^i$. Assume that $u^i|_\Sigma\in H^1(\Sigma)$. Then the total field $u\in H^1_{loc}(\mathbb{R}^2\backslash\overline{\Sigma})$ has the following form
\be\label{eq-sys2-1}
u = u^i + \mathcal{S}_k\left(W-\frac{2\pi}{\ln k}I\right)\phi \quad \mbox{in } \mathbb{R}^2\backslash\overline{\Sigma},
\en
where $\phi \in \mathcal{X}$ is determined by the boundary integral equation
\be
\label{eq-sys2-2}
\mathrm{S}_k\left(W-\frac{2\pi}{\ln k}I\right)\phi=-u^i\quad \mbox{on } \Sigma.
\en
\end{theorem}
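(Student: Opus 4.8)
The plan is to start from the classical single-layer ansatz and then re-express the density through the operator $W-\frac{2\pi}{\ln k}I$. Since $u$ solves \eqref{eq-scattering}, the scattered field $u^s=u-u^i$ is a radiating solution of the Helmholtz equation in $\mathbb{R}^2\backslash\overline{\Sigma}$ satisfying $u^s=-u^i$ on $\Sigma$. I would seek $u^s$ in the form of a single-layer potential $u^s=\mathcal{S}_k\tilde\phi$ with density $\tilde\phi\in\mathcal{X}$. Because $\mathcal{S}_k\tilde\phi$ is automatically radiating and solves the Helmholtz equation in the exterior, imposing the sound-soft condition on $\Sigma$ reduces the problem to the boundary equation $\mathrm{S}_k\tilde\phi=-u^i|_\Sigma$. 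The hypothesis $u^i|_\Sigma\in H^1(\Sigma)$ together with the bounded invertibility of $\mathrm{S}_k:\mathcal{X}\to H^1(\Sigma)$ quoted from \cite{Kress99} then furnishes a unique $\tilde\phi=-\mathrm{S}_k^{-1}(u^i|_\Sigma)\in\mathcal{X}$, and uniqueness of the forward problem \eqref{eq-scattering} guarantees that this single-layer field coincides with the given $u^s$.

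The second step is to show that $A_k:=W-\frac{2\pi}{\ln k}I$ is boundedly invertible on $\mathcal{X}$, so that the density can be rewritten as $\tilde\phi=A_k\phi$ with $\phi=A_k^{-1}\tilde\phi\in\mathcal{X}$. The key observation is that $W=I-P$, where $P\phi:=\frac{1}{|\Sigma|}\int_\Sigma\phi\,{\rm d}y$ is the rank-one projection onto the constant functions. One first checks that $P$ maps $\mathcal{X}$ into itself: the constant function lies in $\mathcal{X}$ because the $C^2$-regularity of each $z_j$ makes $t\mapsto|z_j'(\cos t)|\sin t$ an element of $L^2(0,\pi)$, while the functional $\phi\mapsto\int_\Sigma\phi$ is bounded on $\mathcal{X}\subset L^p(\Sigma)\subset L^1(\Sigma)$. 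Writing $\mu=\frac{2\pi}{\ln k}$, one then has $A_k=(1-\mu)I-P$, a rank-one perturbation of a scalar multiple of the identity. Decomposing $\mathcal{X}$ into the constants, on which $A_k$ acts as multiplication by $-\mu$, and the zero-mean subspace, on which it acts as multiplication by $1-\mu$, invertibility holds precisely when $\mu\neq 0$ and $\mu\neq 1$. For the frequencies of interest $0<k<1$ one has $\ln k<0$, hence $\mu<0$ and both $-\mu$ and $1-\mu$ are strictly positive; an explicit inverse is then available via the Sherman--Morrison formula.

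Combining the two steps gives $u^s=\mathcal{S}_k\tilde\phi=\mathcal{S}_k A_k\phi=\mathcal{S}_k\left(W-\frac{2\pi}{\ln k}I\right)\phi$, which is exactly \eqref{eq-sys2-1}, while restricting to $\Sigma$ and using $\mathrm{S}_k A_k\phi=\mathrm{S}_k\tilde\phi=-u^i|_\Sigma$ yields the boundary equation \eqref{eq-sys2-2}. I expect the main obstacle to lie in the justification of the single-layer representation rather than in the algebra: one must confirm that $\mathcal{S}_k$ and its trace $\mathrm{S}_k$ enjoy the asserted mapping properties on the weighted space $\mathcal{X}$ adapted to the crack tips, and that the density produced by $\mathrm{S}_k^{-1}$ indeed stays in $\mathcal{X}$ so that the decomposition $\tilde\phi=A_k\phi$ remains within the space. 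The reparametrization itself is in fact the whole purpose of the construction: the factor $-\frac{2\pi}{\ln k}I$ is tuned so that, as $k\to 0$, the logarithmic singularity of $\mathrm{S}_k$ annihilates the zero-mean part $W\phi$ while producing a finite contribution from the remaining term, which is precisely what makes the composite operator admit a finite limit and drives the low-frequency expansion used in the proof of Theorem~\ref{thm-unique}.
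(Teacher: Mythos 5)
Your proposal is correct and follows essentially the route the paper relies on: the paper itself omits the proof, citing Kress \cite{Kress99} for the case $n=1$, and your argument --- the single-layer ansatz combined with the bounded invertibility of $\mathrm{S}_k:\mathcal{X}\to H^1(\Sigma)$, followed by the observation that $W-\frac{2\pi}{\ln k}I$ is a rank-one perturbation of a multiple of the identity, acting as $-\mu$ on constants and $1-\mu$ on zero-mean densities and hence invertible for $\mu=\frac{2\pi}{\ln k}\notin\{0,1\}$ --- is precisely the content being delegated. The only caveat worth recording is that the reparametrization degenerates at the isolated values $\ln k\in\{0,\,2\pi\}$, which is harmless in the low-frequency regime $0<k<1$ where the theorem is actually used.
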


Theorem \ref{thm-int-representation2} has been proved by Kress in \cite{Kress99} for a single crack, i.e., $n=1$. The extension from $n=1$ to $n>1$ is straightforward, and thus we omit the proof here.

Recall the fundamental solution of the Laplace equation given by
\ben
\Phi_0(x,y)= -\frac{1}{2\pi} \ln |x-y|, \quad x\in \R^2\ba\{y\}.
\enn
Denote by $\mathcal{S}_0$ and $\mathrm{S}_0$, respectively, the single layer potential $\mathcal{S}$ and boundary operator $\mathrm{S}$ for the limiting case $k=0$ with $\Phi_k$ replaced by $\Phi_0$.
Straightforward calculations show that
\begin{equation}
\label{eq-asym-Phik}
\Phi_k(x,y) = \Phi_0(x,y)  -\frac{1}{2\pi} \ln{k} + C  + O(k^2\ln{k}),\quad k\rightarrow 0,
\end{equation}
where $C = \ln{2}/2\pi - \gamma/2\pi + i/4$ with $\gamma=0.5772...$ denoting the Euler's constant. This implies immediately that $\mathrm{S}_k\phi$ does not converge to
$\mathrm{S}_0\phi$ as $k\rightarrow 0$.
We define
\ben
 L\phi  := \int_\Sigma  \phi(y) {\rm d}y.
\enn
Using \eqref{eq-asym-Phik}, with the help of the obvious fact that $LW=0$, we have
\ben
\mathrm{S}\left(W-\frac{2\pi}{\ln k}I\right)\phi
&=&(\mathrm{S}_0-\frac{1}{2\pi} \ln{k}L + CL+ O(k^2\ln{k}))\left(W-\frac{2\pi}{\ln k}I\right)\phi\\
&=&\left[A -\frac{2\pi}{\ln k}(\mathrm{S}_0+CL) + O(k^2\ln k)\right] \phi, \quad k\rightarrow 0,
\enn
where $A:=\mathrm{S}_0W+L$ is well defined from $\mathcal{X}$ to $H^1(\Sigma)$ and has a bounded inverse (see e.g., Theorem 3.2 in \cite{Kress99}).
Furthermore, for the incident plane wave $u^i=e^{ikx\cdot d}$ with incident direction $d\in S^1$, we have from the boundary condition \eqref{eq-sys2-2} that
\ben
\phi
&=&-\left[\mathrm{S}\left(W-\frac{2\pi}{\ln k}I\right)\right]^{-1}u^i\\
&=&-\left[A -\frac{2\pi}{\ln k}(\mathrm{S}_0+CL) + O(k^2\ln k)\right]^{-1}(1 + O(k))\\
&=&-\left[A^{-1} + \frac{2\pi}{\ln k}  A^{-1} (\mathrm{S}_0+CL)A^{-1}  + O\left(\frac{1}{(\ln k)^{2}}\right)\right] (1 + O(k))\\
&=&- A^{-1}(1) - \frac{2\pi}{\ln k}  A^{-1} (\mathrm{S}_0+CL)A^{-1}(1)  + O\left(\frac{1}{(\ln k)^{2}}\right),\quad k\rightarrow 0.
\enn
Inserting this into \eqref{eq-sys2-1}, with the help of the obvious fact ${A}^{-1}(1) = 1/|\Sigma|$ and $L{A}^{-1}(1)=1$, we have the following theorem on the low frequency expansion of the total fields.

\begin{theorem}\label{u_lowfrequency}
Let $u^i=e^{ik x\cdot d}$ be the incident field with some fixed direction $d\in S^1$. Then the solution $u$ of problem \eqref{eq-scattering} has the following low frequency expansion
\be
\label{eq-asym-us}
u(\cdot, k)=- \frac{2\pi}{|\Sigma|\ln{k}} \left[(\mathcal{S}_0 {W} +{L}){A}^{-1}\mathrm{S}_0(1)-\mathcal{S}_0(1)\right]  + O\left(\frac{1}{(\ln k)^{2}}\right)\quad k\rightarrow0
\en
in $\mathbb{R}^2\backslash\overline{\Sigma}$.
\end{theorem}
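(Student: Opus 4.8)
The plan is to substitute the low-frequency expansion of the density $\phi$ derived just above into the representation \eqref{eq-sys2-1} and to keep careful track of the powers of $1/\ln k$. First I would expand the layer potential itself. Inserting \eqref{eq-asym-Phik} into the definition of $\mathcal{S}_k$ yields, for any $\phi\in\mathcal{X}$,
\[
\mathcal{S}_k\phi=\mathcal{S}_0\phi+\bigl(-\tfrac{1}{2\pi}\ln k+C\bigr)L\phi+O(k^2\ln k),\qquad k\to0,
\]
where the constant correction is exactly the contribution of the $y$-independent part of $\Phi_k$. Applying this with $\phi$ replaced by $\bigl(W-\tfrac{2\pi}{\ln k}I\bigr)\phi$ and using $LW=0$, hence $L\bigl(W-\tfrac{2\pi}{\ln k}I\bigr)\phi=-\tfrac{2\pi}{\ln k}L\phi$, the two $\ln k$ factors in the constant term partly cancel and I obtain
\[
\mathcal{S}_k\bigl(W-\tfrac{2\pi}{\ln k}I\bigr)\phi=(\mathcal{S}_0 W+L)\phi-\tfrac{2\pi}{\ln k}(\mathcal{S}_0+CL)\phi+O(k^2\ln k),
\]
so that \eqref{eq-sys2-1} becomes $u=u^i+(\mathcal{S}_0 W+L)\phi-\tfrac{2\pi}{\ln k}(\mathcal{S}_0+CL)\phi+O(k^2\ln k)$.

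Next I would feed in $\phi=-A^{-1}(1)-\tfrac{2\pi}{\ln k}A^{-1}(\mathrm{S}_0+CL)A^{-1}(1)+O((\ln k)^{-2})$ together with $u^i=1+O(k)$. The bookkeeping is governed by two identities: $A^{-1}(1)=1/|\Sigma|$ and $(\mathcal{S}_0 W+L)A^{-1}(1)=1$, the latter because $W$ annihilates constants and $L(1/|\Sigma|)=1$. At order $O(1)$ the incident field $u^i\approx1$ is cancelled exactly by $(\mathcal{S}_0 W+L)(-A^{-1}(1))=-1$; this is precisely why the expansion begins at order $1/\ln k$ and not at order $1$.

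It then remains to collect the $O(1/\ln k)$ contributions, of which there are three: the second-order part of $\phi$ fed into $\mathcal{S}_0 W+L$, and the leading part $-A^{-1}(1)=-1/|\Sigma|$ of $\phi$ fed separately into $-\tfrac{2\pi}{\ln k}\mathcal{S}_0\phi$ and $-\tfrac{2\pi C}{\ln k}L\phi$. Using $(\mathrm{S}_0+CL)A^{-1}(1)=\tfrac{1}{|\Sigma|}\mathrm{S}_0(1)+C$ and once more $(\mathcal{S}_0 W+L)A^{-1}(1)=1$, the first contribution equals $-\tfrac{2\pi}{|\Sigma|\ln k}(\mathcal{S}_0 W+L)A^{-1}\mathrm{S}_0(1)-\tfrac{2\pi C}{\ln k}$; the $L$-term gives $+\tfrac{2\pi C}{\ln k}$ and the $\mathcal{S}_0$-term gives $+\tfrac{2\pi}{|\Sigma|\ln k}\mathcal{S}_0(1)$. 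The two multiples of $C$ cancel, leaving exactly $-\tfrac{2\pi}{|\Sigma|\ln k}\bigl[(\mathcal{S}_0 W+L)A^{-1}\mathrm{S}_0(1)-\mathcal{S}_0(1)\bigr]$, which is \eqref{eq-asym-us}; since $k^2\ln k=o((\ln k)^{-2})$, the remainder is $O((\ln k)^{-2})$ as claimed.

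The routine content is the operator algebra. The step that needs genuine care is the bookkeeping of the constant, $C$- and $L$-dependent, pieces: one must check both that the $O(1)$ terms cancel (so the leading behaviour is $1/\ln k$) and that the Euler-constant contributions drop out at order $1/\ln k$, so the stated coefficient is free of $C$. A minor point is to confirm that every $O$-symbol is uniform on compact subsets of $\mathbb{R}^2\backslash\overline{\Sigma}$, which follows from the local boundedness of $\mathcal{S}_0$ and the bounded invertibility of $A$ on $\mathcal{X}$.
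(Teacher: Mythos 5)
Your proposal is correct and follows essentially the same route as the paper: expand $\Phi_k$ via \eqref{eq-asym-Phik}, use $LW=0$ to reduce $\mathcal{S}_k(W-\tfrac{2\pi}{\ln k}I)$ to $(\mathcal{S}_0W+L)-\tfrac{2\pi}{\ln k}(\mathcal{S}_0+CL)$ plus $O(k^2\ln k)$, invert by a Neumann series to expand $\phi$, and substitute into \eqref{eq-sys2-1} using $A^{-1}(1)=1/|\Sigma|$ and $LA^{-1}(1)=1$ (equivalently your $(\mathcal{S}_0W+L)A^{-1}(1)=1$, since $W$ annihilates constants). Your explicit verification that the $O(1)$ terms and the Euler-constant contributions cancel is exactly the bookkeeping the paper leaves implicit in the displayed computation preceding the theorem.
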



%

Now we are ready to show Theorem \ref{thm-unique}.
\begin{proof}[Proof of Theorem \ref{thm-unique}]
Assume that $\Sigma\neq \hat{\Sigma}$. Let $G$ be the unbounded connected component of $\mathbb{R}^2\backslash(\overline{\Sigma}\cup\overline{\hat{\Sigma}})$.
By the well known Rellich's lemma and the analytic continuation, the total waves coincide in $G$.

We define
\begin{equation}
\label{eq-uniq-u}
v(x) = \frac{1}{|\Sigma|}[(\mathcal{S}_0{W}+{L}){A}^{-1}\mathrm{S}_0 (1) - \mathcal{S}_0 (1)],\quad x\in \R^2\ba\Sigma.
\end{equation}
Then $v$ is harmonic in $\mathbb{R}^2\backslash\overline{\Sigma}$ and $v(x) = 0$ on $\Sigma$.
Similarly we define $\hat{v}$ by
\ben
\hat{v}(x) = \frac{1}{|\hat{\Sigma}|}[(
\hat{\mathcal{S}}_0\hat{W}+\hat{L})\hat{A}^{-1}\hat{\mathrm{S}}_0 (1) - \hat{\mathcal{S}}_0 (1)], \quad x\in \R^2\ba\hat{\Sigma}.
\enn
Then $\hat{v}$ is harmonic in $\mathbb{R}^2\backslash\overline{\hat{\Sigma}}$ and $v(x) = 0$ on $\hat{\Sigma}$.
An application of Theorem \ref{u_lowfrequency} yields
\be\label{v=hatv}
\left\{
\begin{aligned}
v=\hat{v} &&\mbox{in } G,\\
v=\hat{v}=0 &&\mbox{on } \partial G.
\end{aligned}
\right.
\en

We claim that $G=\mathbb{R}^2\backslash (\overline{\Sigma}\cup\overline{\hat{\Sigma}})$. Otherwise, let $\mathring{G^c}$ be the interior of $G^c:=\R^2\ba G$, we define $D:=\mathring{G^c}\backslash \overline{\Sigma}$. Then $D$ is a bounded domain such that $\partial D \subset \partial G\cup\overline{\Sigma}$.
Hence we find that $v$ solves the following interior Laplace problem
\ben
\left\{
\begin{aligned}
\Delta v &= 0 &\mbox{in } D,\\
 v&=0 &\mbox{on } \partial D.
\end{aligned}
\right.
\enn
Consequently, we have $v=0$ in $D$ by the maximum principle. Moreover, the unique continuation yields $v=0$ in $\mathbb{R}^2$. This contradicts to \eqref{eq-uniq-u} since by straightforward calculation we have
\ben
v(x) = \frac{1}{2\pi} \ln{|x|} + O(1),\quad |x|\to \infty,
\enn
uniformly in all directions $\hx$. Therefore, $G=\mathbb{R}^2\backslash (\overline{\Sigma}\cup\overline{\hat{\Sigma}})$.

Assume that $\Gamma:=\hat{\Sigma}\backslash \overline{\Sigma} \neq \emptyset$. Then, from \eqref{v=hatv}, we have $v=\hat{v}=0$ on $\Gamma\subset\pa G$. This yields a contradiction since $\Gamma\subset\mathbb{R}^2\backslash\overline{\Sigma}$ and $v>0$ in $\mathbb{R}^2\backslash\overline{\Sigma}$ by maximum principle. Therefore $\hat{\Sigma}\subset \Sigma$ and vice versa. The proof is finished.
\end{proof}

\textbf{Remark}: Noting the fact that the far field pattern depends analytically on the frequency $k$, the uniqueness theorem holds if we have the far field patterns for all frequencies contained in some interval for $0<k_-<k<k_+<\infty$.

\section{Modified Newton Method}
\setcounter{equation}{0}

We introduce a modified Newton method to reconstruct the sound soft cracks. Compared with the Newton method proposed by Kress in \cite{Kress95}, our novel method relaxes the assumption of the initial guess and can be applied to multiple cracks. Different to \cite{KressSerra05}, our techniques avoid the case of self-intersecting curves.

For the solution to the exterior boundary problem \eqref{eq-scattering} with $k>0$, we look for the scattered field in the form $u^s=\mathcal {S}_k\phi$ with $\phi\in\mathcal {X}$ obtained by the boundary condition $\mathrm{S}_k\phi=-u^i$. Consequently, we may write the scattered field in the form
\ben
u^s  = \mathcal{S}_k \mathrm{S}_k^{-1} (-u^i)\quad \mbox{in}\,\R^2\ba\Sigma.
\enn


We defines an operator
\begin{equation}
\label{eq-F}
\mathbf{F}: \Sigma \mapsto u^\infty ,
\end{equation}
which maps the cracks $\Sigma$ to  the far field $u^\infty $. The inverse problem is now reduced to solve the nonlinear and ill-posed equation
\ben
\mathbf{F}(\Sigma) = u^\infty,
\enn
which has the linearized form
\begin{equation}
\label{eq-NewtonEq}
\mathbf{F}(\Sigma) + \mathbf{F}'(h;\Sigma)= u^\infty .
\end{equation}
Here $\mathbf{F}'(h;\Sigma)$ is the Fr\'{e}chet derivatives of $\mathbf{F}$ with respect to $\Sigma$ at the direction of $h$.

\subsection{Fr\'{e}chet derivatives}

\begin{theorem}
\label{thm-Frechet}
The operator $\mathbf{F}: \Pi_{i=1}^n C^3(-1,1)\to L^2(S^1)$ in \eqref{eq-F} is  Fr\'{e}chet differentiable. Moreover, $\mathbf{F}'(h;\Sigma)(\hat{x})=v^\infty(\hat{x})$, where $v\in H^1_{loc} (\mathbb{R}^2\backslash \overline{\Sigma})$ is the radiating solution of the Helmholtz equation
\begin{equation}
\label{eq-Helmholtz4v}
    \left\{
\begin{aligned}
    &\Delta v + k^2 v = 0 &\hbox{in }\mathbb{R}^2\backslash \overline{\Sigma}, \\
    &v = -\nu \cdot  Z^{-1}\tilde{h} \frac{\partial u_{\pm}}{\partial \nu}  &\hbox{on } \Sigma
\end{aligned}
\right.
\end{equation}
such that $v-v_0$ is continuous in $\mathbb{R}^2$ with
\begin{equation}
\label{eq-v0}
v_0(x) := \int_\Sigma \nabla_x \Phi_k(x,\cdot)\cdot \tilde{h} \Big[\frac{\partial u_{+}}{\partial \nu}-\frac{\partial u_{-}}{\partial \nu} \Big] {\rm d}s, \quad x\in \mathbb{R}^2\backslash \overline{\Sigma}.
\end{equation}
Here, $u$ is the solution of the scattering problem \eqref{eq-scattering} and $\tilde{h_i}(t) = h_i(\cos t)$ for $t\in(0,\pi)$.
\end{theorem}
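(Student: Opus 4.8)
The plan is to freeze the domain of integration by means of the parametrization, differentiate the boundary integral equation with respect to the perturbation parameter, and then read off the shape derivative as the far field of a radiating solution. Write the scattered field as $u^s=\mathcal{S}_k\phi$ with $\phi=-\mathrm{S}_k^{-1}u^i\in\mathcal{X}$. For a perturbation in the direction $h=(h_1,\dots,h_n)$, I would replace each parametrization by $z_j^\vep:=z_j+\vep h_j$, obtaining perturbed cracks $\Sigma_\vep$ together with operators $\mathcal{S}_k^\vep,\mathrm{S}_k^\vep$ and densities $\phi_\vep=-(\mathrm{S}_k^\vep)^{-1}(u^i|_{\Sigma_\vep})$. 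Pulling the identity $\mathrm{S}_k^\vep\phi_\vep=-u^i|_{\Sigma_\vep}$ back to the fixed parameter domain $(-1,1)^n$ via the substitutions $s=\cos t$ used to define $\mathcal{X}$, the interval of integration no longer depends on $\vep$; all the $\vep$-dependence is then carried by the kernel $\Phi_k(z^\vep(s),z^\vep(\sigma))$, the arclength factors $|{z^\vep}'|$, and the right-hand side $u^i(z^\vep(s))$.

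Since $\Phi_k$ is analytic off the diagonal and its diagonal singularity is purely logarithmic, and since the weight built into $\mathcal{X}$ absorbs the endpoint singularities at $z_j(\pm1)$, I expect the pulled-back family $\vep\mapsto\mathrm{S}_k^\vep$ to be continuously differentiable with values in the bounded operators from $\mathcal{X}$ to $H^1(\Sigma)$; the $C^3$ regularity of $h$ guarantees that enough smoothness survives differentiation of the kernel. Using the bounded invertibility of $\mathrm{S}_k$, the map $\vep\mapsto\phi_\vep$ is then differentiable, and differentiating $\mathrm{S}_k^\vep\phi_\vep=-u^i|_{\Sigma_\vep}$ at $\vep=0$ yields a boundary integral equation for $\phi':=\pa_\vep\phi_\vep|_{\vep=0}$. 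Differentiating $u^s_\vep=\mathcal{S}_k^\vep\phi_\vep$ and passing to the far field gives $\mathbf{F}'(h;\Sigma)=v^\infty$, the far field of a radiating solution $v$. Linearity and boundedness of $h\mapsto\mathbf{F}'(h;\Sigma)$, and hence the Fr\'echet property with $o(\|h\|)$ remainder, follow from the bounded, linear dependence on $h$ of all the differentiated operators, together with the uniform invertibility.

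It remains to identify the boundary value problem for $v$. Differentiating the boundary condition $u_\vep=0$ along the moving crack, the material derivative of $u_\vep\circ z^\vep$ vanishes; splitting it into the shape derivative $v$ and a transport term, and using $u=0$ on $\Sigma$ so that only the normal derivative survives, gives $v=-(\nu\cdot\delta z)\,\pa u/\pa\nu$, where $\delta z$ denotes the displacement. Because the crack is two sided and $\pa u/\pa\nu$ jumps across $\Sigma$, this is exactly the stated condition $v=-\nu\cdot Z^{-1}\tilde h\,\pa u_\pm/\pa\nu$. Finally, differentiating the single-layer representation produces a term in which $\nabla_x\Phi_k$ is paired with the jump $\pa u_+/\pa\nu-\pa u_-/\pa\nu$; this is precisely $v_0$ in \eqref{eq-v0}, and it carries the entire discontinuity of $v$ across $\Sigma$, so that $v-v_0$ is continuous in $\R^2$.

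The main obstacle is the increase of singularity order caused by differentiation: the gradient $\nabla_x\Phi_k$ turns the weakly singular single-layer kernel into a strongly singular one, so the delicate point is to verify that the differentiated boundary operators remain bounded on $\mathcal{X}$ and $H^1(\Sigma)$ and that the correct jump relations survive the limit $\vep\to0$. The decomposition $v=v_0+(v-v_0)$, the cosine substitution handling the crack tips, and the $C^3$ assumption on $h$ are exactly the tools that I expect to make this rigorous.
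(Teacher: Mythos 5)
Your proposal follows the same overall framework as the paper: both differentiate the combined representation $u^s=\mathcal{S}_k\mathrm{S}_k^{-1}(-u^i)$ with respect to the parametrization, both rest on the Fr\'echet differentiability of the (weakly singular) boundary operator $\mathrm{S}_k$ --- which the paper does not prove either but pins to Kress's result for a single arc, extending it to $n$ cracks only by noting that the off-diagonal blocks have smooth kernels --- and both identify $v_0$ as the term in which $\nabla_x\Phi_k$ is paired with the jump $\partial u_+/\partial\nu-\partial u_-/\partial\nu$, so that $v-v_0$ is a sum of single-layer potentials and hence continuous. The one genuine difference is how the boundary condition in \eqref{eq-Helmholtz4v} is obtained. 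You use the material-derivative shortcut: differentiate $u_\vep\circ z^\vep=0$ and split off the transport term, keeping only the normal component because $u$ vanishes on $\Sigma$. The paper instead computes the one-sided traces of the three pieces $v_1=\mathcal{S}_k'\psi$, $v_2=-\mathcal{S}_k\mathrm{S}_k^{-1}\mathrm{S}_k'\psi$, $v_3=\mathcal{S}_k\mathrm{S}_k^{-1}Z'(-u^i)$ explicitly via the jump relations of the gradient of the single-layer potential (the $\pm\frac12\nu\cdot Z^{-1}\tilde h_i\phi_i$ terms) and the identity $\phi=\partial u_-/\partial\nu-\partial u_+/\partial\nu$, then adds them. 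Your route is shorter and more conceptual, but for an open arc it presupposes that the one-sided traces of $\nabla u_\vep$ exist and vary differentiably in $\vep$ up to the crack faces, which is precisely the regularity question near the tips; the paper's explicit trace computation is the rigorous substitute for that step, and you would need to either import it or justify the chain rule for $u_\vep\circ z^\vep$ in the weighted setting. Also note that you argue via a one-parameter (Gateaux) derivative and then upgrade to Fr\'echet by linearity and uniformity, whereas the paper obtains $O(\|h\|_{C^3}^2)$ remainders directly from the kernel expansions; your upgrade is standard but should be stated as requiring continuity of $h\mapsto F'(h;\Sigma)$ uniformly over the perturbation class.
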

The special case with one crack has been established by Kress in \cite{Kress95}. For completeness, we generalize this result to $n$ cracks.
\begin{proof}
To show $\mathbf{F}:\Sigma \mapsto u^\infty $ is Fr\'{e}chet differentiable, we prove first that $F:\Sigma \mapsto u^s= \mathcal{S}_k \mathrm{S}_k^{-1} (-u^i)$ is Fr\'{e}chet differentiable. Then $\mathbf{F}'(h;\Sigma)$ is exactly the far field pattern corresponding to $F'(h;\Sigma)$.

We rewrite $\mathcal{S}_k$ as
\ben
(\mathcal{S}_k (\psi;\Sigma))(x) = \sum_{i=1}^n \int_0^\pi \Phi_k(x, z_i(\cos \tau)) \psi_i(\tau) {\rm d} \tau, \quad x\in D,
\enn
where $\psi:=(\psi_1,\psi_2,\cdots, \psi_n)^T\in\Pi_{i=1}^n L^2(0,\pi)$, $z=(z_j)_{1\leq j\leq n}$. Here, $D\subset \mathbb{R}^2$ is a bounded domain that has a positive distance to ${\Sigma}$. For such $D$, any small deviation $\Sigma+h$ of $\Sigma$ should also have a positive distance to $D$, which makes sure that we can calculate the Fr\'{e}chet derivatives of all $h$.
Correspondingly, we rewrite the restriction $\mathrm{S}_k$ on $\Sigma$ as
\ben
\mathrm{S}_k =
\begin{pmatrix}
\mathrm{S}_k^{1,1} & \mathrm{S}_k^{1,2}   & \cdots & \mathrm{S}_k^{1,n} \\
\mathrm{S}_k^{2,1} & \mathrm{S}_k^{2,2}   & \cdots & \mathrm{S}_k^{2,n} \\
\vdots & \vdots   & \ddots & \vdots \\
\mathrm{S}_k^{n,1} & \mathrm{S}_k^{n,2}   & \cdots & \mathrm{S}_k^{n,n}
\end{pmatrix},
\enn
where $\mathrm{S}_k^{j,i}$ is given by
\ben
\mathrm{S}_k^{j,i} (\psi_i;\Sigma) = \int_0^\pi \Phi_k(z_j(\cos t), z_i(\cos \tau)) \psi_i(\tau) {\rm d} \tau,\quad   t\in (0,\pi).
\enn
Then the scattered field is formulated as $u^s=F(\Sigma)  = \mathcal{S}_k \mathrm{S}_k^{-1} Z(-u^i)$, where
\ben
Z(f;\Sigma) =
\Big(
f(z_j(\cos t_j))
\Big)_{1\leq j\leq n}
\enn
transforms $f$ onto the  $\Pi_{i=1}^n H^1(0,\pi)$. Now we prove that $\mathcal{S}_k, \mathrm{S}_k^{-1},Z$ are all Fr\'{e}chet differentiable.
\begin{itemize}
  \item Clearly $Z$ is Fr\'{e}chet differentiable and
\begin{equation}
\label{eq-fre-Z}
Z'(f;\Sigma,h) =
\Big(
h_j(\cos t_j)\cdot \nabla f(z_j(\cos t_j))
\Big)_{1\leq j\leq n}.
\end{equation}
  \item For $\mathrm{S}_k$, it's proved in \cite{Kress95} that the diagonal term $\mathrm{S}_k^{i,i}$ is Fr\'{e}chet differentiable if $z_i\in C^3((-1,1),\R^2),  i=1,2,\cdots,n$. Moreover, the Fr\'{e}chet derivative of $\mathrm{S}_k^{i,i}$ is
\begin{equation}
\label{eq-fre-Sii}
{\mathrm{S}_k^{i,i}}'(\psi_i; z,h) (t) =\int_0^{\pi} H'(t,\tau;z_i,h_i) \psi_i(\tau){\rm d}\tau,\quad t\in(0,\pi),
\end{equation}
with the kernel
\begin{equation*}
H'(t,\tau;z_i,h_i) = \nabla_x \Phi_k(z_i(\cos t), z_i(\cos \tau))\cdot [h_i(\cos t) - h_i(\cos \tau)],\quad t\neq \tau.
\end{equation*}
It remains to prove that $\mathrm{S}_k^{i,j}$ is Fr\'{e}chet differentiable when $i\neq j$. Since $\Gamma_i$ and $\Gamma_j$ have positive distance, the kernel has no singularity and
\ben
\mathrm{S}_{k}^{i,j} (\psi_j; \Sigma+h) =\mathrm{S}_{k}^{i,j} (\psi_j;\Sigma) + {\mathrm{S}_{k}^{i,j}}'( \psi_j;\Sigma, h) + O(\|h_j\|_{C^2}^2),
\enn
where
\be\label{eq-fre-Sij}
&&{\mathrm{S}^{i,j}_{k}}'( \psi_j,\Sigma,  h)(t)\cr
& =&   \int_0^\pi [h_i(\cos t) -h_j(\cos \tau)] \cdot\nabla_x \Phi_k(z_i(\cos t), z_j(\cos \tau)) \psi_j(\tau) {\rm d} \tau,\quad t\in(0,\pi).\quad
\en
Therefore, $\mathrm{S}_k$ is Fr\'{e}chet differentiable with $\mathrm{S}_k'=({\mathrm{S}_k^{i,j}}')_{1\leq i,j\leq n}$.
Furthermore, $\mathrm{S}_k^{-1}$ is also Fr\'{e}chet differentiable with
\be\label{eq-fre-Sinv}
{\mathrm{S}_k^{-1}}'(\cdot;\Sigma,h)  = - \mathrm{S}_k^{-1}\mathrm{S}_k'(\cdot;\Sigma,h) \mathrm{S}_k^{-1}.
\en
  \item For $\mathcal{S}_k$, straightforward calculations show that
\ben
\mathcal{S}_{k} (\psi;\Sigma+h) = \mathcal{S}_{k} (\psi;\Sigma) + \mathcal{S}_{k}'( \psi; \Sigma, h) + O(\|h\|_{C^3}^2),
\enn
where
\be\label{eq-fre-S}
\mathcal{S}_{k}'( \psi;\Sigma, h) =  -\sum_{j=1}^n \int_0^\pi h_j(\cos \tau) \cdot\nabla_x \Phi_k(x, z_j(\cos \tau)) \psi_j(\tau) {\rm d} \tau,\quad x\in D.
\en
Thus $\mathcal{S}_k$ is Fr\'{e}chet differentiable. By \eqref{eq-fre-S}, $\mathcal{S}_{k}'( \psi;\Sigma, h)$ can be analytically extended to $\mathbb{R}^2\backslash\overline{\Sigma}$.  Moreover,
\begin{equation*}
\mathcal{S}_{k}'( \psi;\Sigma, h)|_{\G_i^\pm} =-\sum_{j=1}^n \int_0^\pi h_j(\cos \tau) \cdot\nabla_x \Phi_k(z_i(\cos(t)), z_j(\cos \tau)) \psi_j(\tau) {\rm d} \tau  \pm \frac{1}{2} \nu\cdot Z^{-1}\tilde{h}_i \phi_i.
\end{equation*}
Here $\tilde{h_i}(t) = h_i(\cos t)$ for $t\in(0,\pi)$.
Using the jump relation $\phi=\frac{\partial u_-}{\partial \nu}-\frac{\partial u_+}{\partial \nu}$, in terms of \eqref{eq-fre-Sii}-\eqref{eq-fre-Sij}, we have
\begin{equation}
\label{eq-SonT}
\mathcal{S}_{k}'( \psi;\Sigma, h)|_{\G_i^\pm} = -Z^{-1}\tilde{h}_i\cdot \nabla u^s _\pm + Z^{-1}\mathrm{S}_k'(\psi;\Sigma,h)|_{\G_i}
\end{equation}
on $\Gamma_i=\{z_i(t)\}\subset \Sigma$.
\end{itemize}

Now we are ready to discuss the Fr\'{e}chet derivative of $F$.
By the chain rule, with the help of \eqref{eq-fre-Z}, \eqref{eq-fre-Sinv} and \eqref{eq-fre-S}, we derive that the $F$ is Fr\'{e}chet differentiable and
\ben
F'(h;\Sigma)=v:= -(\mathcal{S}_k \mathrm{S}_k^{-1} Z)'u^i=v_1  + v_2  + v_3,
\enn
where
\ben
\begin{aligned}
v_1 	&:= \mathcal{S}_k'(\psi;\Sigma, h), \\
v_2 	&:= -\mathcal{S}_k \mathrm{S}_k^{-1}\mathrm{S}_k'(\psi;\Sigma, h), \\
v_3  	&:= \mathcal{S}_k \mathrm{S}_k^{-1}Z'(-u^i;\Sigma, h),
\end{aligned}
\enn
with $\psi = \mathrm{S}_k^{-1}Z(-u^i)$.

By cosine substitution and the jump relation of the gradient of $\mathcal{S}_k$,
\ben
v_1=v_0\quad\mbox{in}\,\R^2,
\enn
where $v_0$ is given in \eqref{eq-v0}. Moreover, by \eqref{eq-SonT},
\be\label{eq-v1onSig}
v_{1,\pm} =  - Z^{-1}\tilde{h}_i\cdot \nabla u^s _\pm + Z^{-1}\mathrm{S}_k'\quad \mbox{on}\, \Gamma_i\subset \Sigma.
\en

By the continuity of single-layer potential, $w:= v-v_0=v_2+v_3$ is continuous over $\mathbb{R}^2$. Furthermore, $w$ is a radiating solution of the Helmholtz equation in $\R^2\ba\ov{\Sigma}$ with Dirichlet boundary value
\ben
w =     -Z^{-1}\mathrm{S}_k'(\psi;\Sigma, h) +    Z^{-1}Z'(-u^i;\Sigma, h) \quad \hbox{on } \Sigma.
\enn
With the help of \eqref{eq-fre-Z}, we have
\be\label{eq-wonSig}
w = -Z^{-1}\mathrm{S}_k'(\psi;\Sigma,h) - Z^{-1}\tilde{h}\cdot \nabla u^i\quad \hbox{on } \Sigma.
\en
Combining \eqref{eq-v1onSig} and \eqref{eq-wonSig}, it follows that $v$ is a radiating solution of \eqref{eq-Helmholtz4v}.

By the fact $F'(h;\Sigma)=v$, we have
\ben
\|F(\Sigma+h)-F(\Sigma)-v\|_{H^1(D)}=O(\|h\|_{C^3}^2).
\enn
Furthermore, using the trace theorem and the Green's representation \cite{CK2019} of the far field pattern, we deduce that
\ben
\|\mathbf{F}(\Sigma+h)-\mathbf{F}(\Sigma)-v^\infty\|_{L^2(S^1)}=O(\|h\|_{C^3}^2).
\enn
The proof is complete.
\end{proof}
Note that it's necessary to assume that $z_j\in C^3((-1,1),\R^2)$ for all $j$ in order to ensure the existence of the Fr\'{e}chet derivatives. However, the condition $z_j\in C^2((-1,1),\R^2)$ for all $j$ is enough to ensure the well-posedness of the scattering problem and the uniqueness of the inverse problem.

\begin{corollary}
\label{thm-kerF}
The nullspace of $\mathbf{F}'(\cdot;\Sigma)$ is given by
\be
\label{eq-nullF}
N(\mathbf{F}'(\cdot;\Sigma)) = \{
h\in \Pi_{i=1}^n C^3((0,\pi),\R^2)| \nu(z_i(\cos t)) \cdot h(\cos t) = 0, t\in \mathbb{R}, 1\leq i\leq n
\}.
\en
\end{corollary}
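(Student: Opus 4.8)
The plan is to establish the two inclusions in \eqref{eq-nullF} separately, using the solution $v$ of \eqref{eq-Helmholtz4v} furnished by Theorem \ref{thm-Frechet}, whose far field pattern is $\mathbf{F}'(h;\Sigma)$, together with the uniqueness of the exterior Dirichlet problem and Holmgren's uniqueness theorem. The only structural observation needed at the outset is that $Z^{-1}\tilde{h}$ is precisely the perturbation field $h$ read back onto $\Sigma$, so that $\nu\cdot Z^{-1}\tilde{h}$ equals the normal component $\nu\cdot h$ of the perturbation on each crack, which is the quantity appearing in \eqref{eq-nullF}.

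The inclusion $\supseteq$ is the easy direction. Suppose $\nu(z_i(\cos t))\cdot h(\cos t)=0$ for every $t$ and every $1\le i\le n$. Then the boundary datum of $v$ in \eqref{eq-Helmholtz4v} vanishes on both faces $\Gamma_i^{\pm}$ of every crack, so $v$ is a radiating solution of the Helmholtz equation in $\R^2\backslash\overline{\Sigma}$ with zero Dirichlet data. By the uniqueness of the exterior Dirichlet problem for the cracks, $v\equiv 0$, whence $v^\infty=\mathbf{F}'(h;\Sigma)=0$ and $h\in N(\mathbf{F}'(\cdot;\Sigma))$.

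For the reverse inclusion $\subseteq$, suppose $\mathbf{F}'(h;\Sigma)=v^\infty=0$. By Rellich's lemma together with the connectedness of $\R^2\backslash\overline{\Sigma}$, we get $v\equiv 0$ in the exterior; in particular both boundary traces of $v$ vanish, so by Theorem \ref{thm-Frechet}
\[
\nu\cdot Z^{-1}\tilde{h}\,\frac{\partial u_{\pm}}{\partial\nu}=0 \quad\mbox{on }\Sigma .
\]
Let $\Gamma_i'\subset\Gamma_i$ be the relatively open arc on which $\nu\cdot h\neq 0$. On $\Gamma_i'$ this identity forces $\partial u_{+}/\partial\nu=\partial u_{-}/\partial\nu=0$. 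The main obstacle is to show that $\Gamma_i'$ must be empty. Here I would combine the vanishing Neumann trace $\partial u_{+}/\partial\nu=0$ on $\Gamma_i'$ with the sound-soft condition $u=0$ on $\Sigma$: the total field $u$ then carries vanishing Cauchy data on the open arc $\Gamma_i'$, so Holmgren's uniqueness theorem gives $u\equiv 0$ in a one-sided neighborhood of $\Gamma_i'$, and analytic continuation through the connected exterior yields $u\equiv 0$ in $\R^2\backslash\overline{\Sigma}$. This is impossible, since it forces $u^s=-u^i=-e^{ik x\cdot d}$, contradicting the fact that $u^s$ satisfies the Sommerfeld radiation condition while the plane wave $u^i$ does not. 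Hence $\Gamma_i'=\emptyset$ for every $i$, i.e. $\nu\cdot h=0$ on all of $\Sigma$, which is exactly \eqref{eq-nullF} after the cosine substitution.

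A point to handle with care is regularity: the normal traces $\partial u_{\pm}/\partial\nu$ are a priori only of $L^2$-type, so I would first invoke interior elliptic (analytic) regularity to see that $u$ is real-analytic up to each $C^3$ crack face away from the two tips. This legitimizes both the pointwise product $\nu\cdot h\,\partial u_{\pm}/\partial\nu$ on $\Gamma_i'$ and the application of Holmgren on that open arc; the finitely many crack tips play no role, since $\nu\cdot h$ is continuous and the conclusion $\nu\cdot h=0$ then extends to all of $\Sigma$ by continuity.
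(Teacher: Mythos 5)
Your proposal is correct and follows essentially the same route as the paper: Rellich's lemma to conclude $v\equiv 0$, hence $\nu\cdot Z^{-1}\tilde h\,\partial u_\pm/\partial\nu=0$ on $\Sigma$, then Holmgren's theorem (plus the radiation condition) to rule out $\partial u_\pm/\partial\nu$ vanishing on an open arc, and the converse inclusion via uniqueness of the exterior Dirichlet problem. You merely expand the Holmgren step and the regularity caveat that the paper states in one line.
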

\begin{proof}
Assume that $\mathbf{F}'(h;\Sigma)=0$. By the Rellich's lemma \cite{CK2019} and the analyticity of $v$ in \eqref{eq-Helmholtz4v}, we have $v_\pm = 0$. Hence it follows that
\ben
-\nu\cdot Z^{-1}\tilde{h} \frac{\partial u_\pm}{\partial \nu} = 0 \quad \mbox{on }\Sigma.
\enn
By the Holmgreen's uniqueness theorem, $ \frac{\partial u_\pm}{\partial \nu}$ can not vanish on any open subset of $\Sigma$. Then $\nu\cdot \tilde{h}$ must vanish on $\Sigma$.
On the other hand, with the help of \eqref{eq-Helmholtz4v}, $\mathbf{F}'(h;\Sigma)=0$ as long as $\nu\cdot \tilde{h}$. The proof is complete.
\end{proof}

\subsection{Modified Newton method}

The nonempty nullspace of $\mathbf{F}'(\cdot;\Sigma)$ in \eqref{eq-nullF} indicates that the parameterization of cracks should be chosen carefully to escape from $N(\mathbf{F}'(\cdot;\Sigma))$. In our problem setting, the unknown cracks are supposed to be the type
\ben
z_j(s)=\Big(x_j(s),\, y_j(s)\Big),\quad s\in (-1, 1)
\enn
such that $x_j, y_j \in C^2(-1,1)$ and $x_j$ is monotone for $j=1,2,\cdots,n$.
Based on the Fr\'{e}chet derivatives given in Theorem \ref{thm-Frechet}, we introduce a modified Newton method as described in Algorithm \ref{alg-1}.
The Newton method in \cite{Kress95} is a degenerated version of Algorithm ref{alg-1} when the horizontal axis is fixed.

For numerical similations, $h$ is taken from a finite dimensional subspace of $\Pi_{i=1}^n C^3(-1,1)$.
Different to the  Newton method in \cite{Kress95} and the hybrid method in \cite{KressSerra05}, to avoid self-intersecting curves,
we choose
\be\label{eq-h}
h=\left(d_{j0}+d_{j1}s,\,  \sum_{i=0}^p  c_{ij}T_i(s) \right)_{j=1}^n,
\en
where $T_i$ is the Chebyshev polynomial of degree $i$.
Note that $h$ is a linear combination of
\be
\label{eq-hij}
\begin{aligned}
h_{ij}(s) &= (0,T_i(s)), \quad 1\leq j\leq n,\, 0\leq i\leq p,\\
h_{j}^{(i)}(s) &= (s^i,0), \quad  1\leq j\leq n,\, 0\leq i\leq 1.
\end{aligned}
\en
Here $h_{ij}$ and $h_{j}^{(i)}$ is defined on the $j$-th crack and vanish on the other cracks.
It is easy to check that $h_{ij}\not\in N(\mathbf{F}'(\cdot;\Sigma))$ if $x'_j\neq 0$ and $h_j^{(i)}\not\in N(\mathbf{F}'(\cdot;\Sigma))$ if $y_j'\neq 0$.



To solve the linear system of  \eqref{eq-NewtonEq} stably, we consider the iteration with respect to $p$. Precisely, the reconstruction using smaller $p$ will be used as the initial guess for the reconstruction with bigger $p$.


\begin{algorithm}\label{alg-1}
      \caption{Modified Newton method with a single frequency data}
\KwIn{$u^\infty (\hat{x})$ for $\hat{x}\in S^1$; \\
  \qquad\quad initial guess $\Sigma^{(0)}=\{\Gamma_j = \{z_j^{(0)}(s)\}| j=1,2,\cdots, n\}$.}
\KwOut{cracks reconstruction $\Sigma$.}
Compute the far field  $\tilde{u}^\infty$ by solving the direct problem \eqref{eq-scattering} with cracks $\Sigma=\Sigma^{(0)}$\;
Compute the relative error $J_r:=\|\tilde{u}^\infty-u^\infty \|/ \|u^\infty \|$\;
\For{$p=p^{(0)},p^{(0)}+1,\cdots,m_p$}
{
      \Do{$\delta J_r>\epsilon$}{
             Compute the Fr\'{e}chet derivatives $F_v^{ij}$ for vertical displacement $h_{ij}$ in \eqref{eq-hij} for $1\leq j\leq n, 1\leq i\leq p$\;
             Compute the Fr\'{e}chet derivatives $F_h^{ij}$ for horizontal displacement $h_{j}^{(i)}$ in \eqref{eq-hij} for $1\leq j\leq n, 1\leq i\leq 2$\;
             \uIf{$F_h^{ij}\neq 0$ for all $i,j$}{
                   Solve $\sum_{i,j} c_{ij} F_v^{ij}+\sum_{i,j}  d_{ij}F_h^{ij}= u^\infty -\tilde{u}^\infty$ to get $c_{ij},d_{ij}$\;
                   \textbf{Update} the crack $z_j = z_j+\sum_{i} c_{ij} h_{ij} + \sum_{i} d_{ij}h_j^{(i)}$ for all $j$\;
             }
             \Else{
                   Solve $\sum_{i,j} c_{ij} F_v^{ij}= u^\infty -\tilde{u}^\infty$ to get $c_{ij}$\;
                   \textbf{Update} the crack $z_j = z_j+\sum_{i} c_{ij} h_{ij}$ for all $j$\;
             }
         Compute the far field  $\tilde{u}^\infty$ by solving the direct problem \eqref{eq-scattering} with cracks $\Sigma=\{z_j| j=1,2,\cdots,n\}$\;
         Compute the new error $\wi{J_r}=\|\tilde{u}^\infty-u^\infty \|/ \|u^\infty \|$\;
         Update $\delta J_r = |\wi{J_r}-J_r|$, $J_r=\wi{J_r}$\;
      }
}
\end{algorithm}

Generally speaking, the high frequency data may produce reconstructions with high quality. However, this heavily depends on a fine initial guess.
Based on this observation, we introduce a multi-frequency version of Algorithm \ref{alg-1} as in Algorithm \ref{alg-2}. Precisely, the lower frequency data is used to get a rough reconstruction, which is then used as an initial guess for the modified Newton method with the higher frequency data.
Note that using high frequency data to determine the  horizontal axis is pretty unstable. When utilizing higher frequency data, we always fix the horizontal axis obtained by the reconstruction with low frequency data.

\begin{algorithm}\label{alg-2}
      \caption{Modified Newton method with multi-frequency data.}
  \KwIn{$u^\infty (\hat{x};k)$ for $\hx\in S^1$, $k=k_1,k_2, \cdots, k_m$; \\
  \qquad\quad initial guess $\Sigma^{(0)}=\{\Gamma_j = \{z_j^{(0)}(s)\}| j=1,2,\cdots, n\}$.}
  \KwOut{cracks reconstruction $\Sigma$.}
      Use low frequency data $u^\infty (\cdot;k_1)$, initial guess $\Sigma^{(0)}$ and $p^{(0)}=1$ to run Algorithm \ref{alg-1} until $J_r(k_1)<\epsilon_{k_1}$ to get the reconstruction $\Sigma^{(k_1)}$ of order $p_{k_1}$\;
        Set $p=p_{k_1}$, $\Sigma =\Sigma^{(k_1)}$ \;
  \For{$k=k_2, \cdots, k_m$}
  {
  		Take $u^\infty (\cdot;k)$ as data, $\Sigma$ as initial guess and $p^{(0)}=p$. Run Algorithm \ref{alg-1}, without updating horizontal axis, to get the reconstruction $\Sigma^{(k)}$ of order $p_{k}$ such that $J_r(k)<\epsilon_k$\;
      Set $p=p_{k}$, $\Sigma =\Sigma^{(k)}$ \;
}
\end{algorithm}

\section{Numerical experiments}
\setcounter{equation}{0}

This section is devoted to verify the modified Newton methods introduced in the previous section. For the direct problem, we use the Nyst\"{o}m method as presented in  \cite{ChapkoKress93} and \cite{Kress95}.
The far field patterns $u^{\infty}(\hat{x}_j, d,k)$ are taken evenly at the $32$ observation directions
\ben
\hat{x}_j = \left(\cos \frac{j\pi}{16},\, \sin \frac{j\pi}{16}\right), \quad j=1,2,\cdots,32.
\enn
If not additionally mentioned, we take $d=(1,0)$ and $k=3$.
These data are then stored in the vector $F_\Sigma \in \C^{1 \times 32}$.
We further perturb
$F_\Sigma$ by random noise using
\ben
F_{\Sigma}^{\delta}\ =\ F_{\Sigma} +\delta\|F_{\Sigma}\|\frac{R}{\|R\|},
\enn
where $R$ is a $1 \times 32$ vector containing pseudo-random values
drawn from a normal distribution with mean zero and standard deviation one. The
value of $\delta$ used in our code is $\delta:=\|F_{\Sigma}^{\delta} -F_{\Sigma}\|/\|F_{\Sigma}\|$ and so presents the relative error.
%
In addition, the stopping criterion $\epsilon$ is set to be $0.001$ in all examples.

We have presented two examples in the next two subsections to validate our algorithms.
The Newton-type iterative methods give quite good reconstructions with the price of a fine initial guess. To solve this problem, we show, in the subsection 4.3, the potential of the direct sampling method and the level set method for such a fine initial guess.

\subsection{Example (a): Identifying cracks}
Our first example takes three cracks in the following and is intended to illustrate two advantages of our method. First, the method works for multiple cracks. Second, the initial guess is allowed to be shifted from the true cracks.
\ben
\begin{aligned}
\Gamma_1 &= \{(0.5s+1,\,  0.5\cos(0.5\pi s)+0.2 \sin(0.5\pi s)-0.1\cos(1.5\pi s))| s\in(-1,1)\}, \\
\Gamma_2 &= \{(0.5s-1,\, -1-0.4 \sin(0.5\pi s)+0.1\cos(1.5\pi s))| s\in(-1,1)\}, \\
\Gamma_3 &= \{(s,\, 3+0.3 \cos(0.5\pi s)+0.2\sin(0.5\pi s))| s\in(-1,1)\}.
\end{aligned}
\enn
To avoid the "inverse crime", the cracks are chosen to be trigonometric functions, which do not belong to the polynomial space.

In Figure \ref{fig-a-0noise} we show results of the reconstruction together with the initial data:
\ben
\begin{aligned}
\Gamma_1^{(0)} &= \{(0.5s+1.5, 0)| s\in(-1,1)\}, \\
\Gamma_2^{(0)} &= \{(0.5s-1.5, -1)| s\in(-1,1)\}, \\
\Gamma_3^{(0)} &= \{(s , 3)| s\in(-1,1)\}.
\end{aligned}
\enn
The initial guess has a horizontal translation compared to the true cracks, and thus the classical Newton method in \cite{Kress95} fails to handle.

Figure \ref{fig-a-noise} shows the results with noise at different noise levels. Considering the noise levels up to $0.1$, the reconstructions are quite good.
Table \ref{tab-a} compares the reconstructed coefficients of $\Gamma_1$ with different problem settings. We observe that the data with incident direction $(0,1)$ offers slightly better reconstructions than the data with incident direction $(1,0)$, which may due to the fact that $\Gamma_1$ locates at the right bottom part.

\begin{figure}[h!]
    \centering
    \begin{tabular}{cc}
        \subfigure[Initial guess.]{
            \label{fig-(a)-direct}
            \includegraphics[width=.45\textwidth]{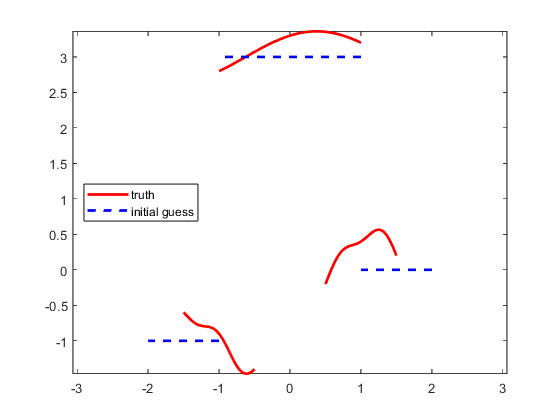}
        } \hspace{0em} &
        \subfigure[Results with $m_p = 4$.]{
            \label{fig-(a)-result}
            \includegraphics[width=.45\textwidth]{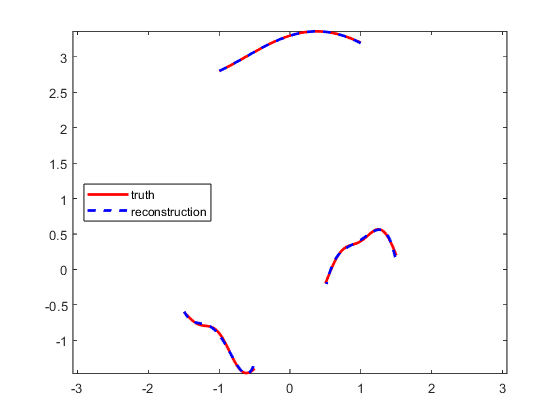}
        }
    \end{tabular}
    \caption{Example (a): initial guess and result without noise.}
    \label{fig-a-0noise}
\end{figure}

\begin{figure}[h!]
    \centering
    \begin{tabular}{cc}
        \subfigure[$\delta = 0.01$, $m_p=4$.]{
            \label{fig-(a)-001noise}
            \includegraphics[width=.45\textwidth]{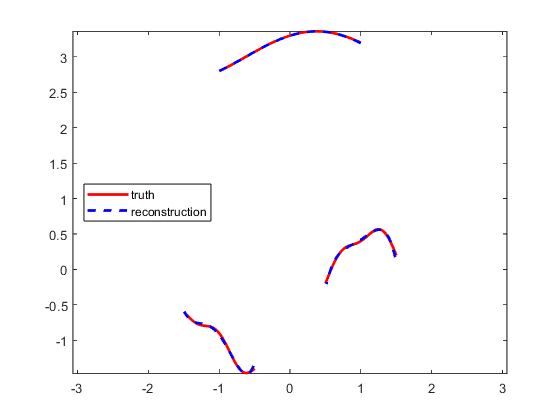}
        } \hspace{0em} &
        \subfigure[$\delta = 0.03$, $m_p = 4$.]{
            \label{fig-(a)-003noise}
            \includegraphics[width=.45\textwidth]{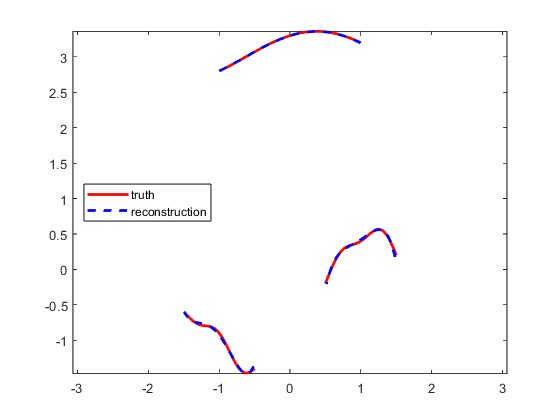}
        }\\
        \subfigure[$\delta = 0.05$, $m_p=4$.]{
            \label{fig-(a)-005noise}
            \includegraphics[width=.45\textwidth]{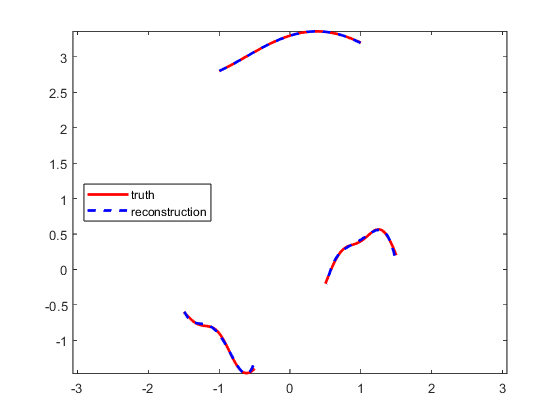}
        } \hspace{0em} &
        \subfigure[$\delta = 0.1$, $m_p = 4$.]{
            \label{fig-(a)-01noise}
            \includegraphics[width=.45\textwidth]{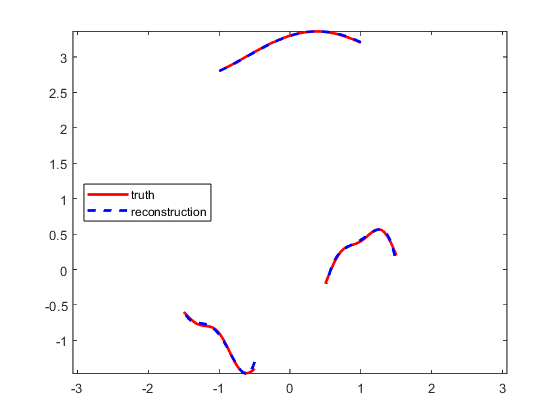}
        }
    \end{tabular}
    \caption{Example (a): results with noise at different noise level $\delta$.}
    \label{fig-a-noise}
\end{figure}

\begin{table}
     \centering
     \begin{tabular}{cccccccccc}
     \hline
     {$u^i$} & 	$\delta$ & $m_p$	&	$d_{1,0}$	& $d_{1,1}$	& $c_{1,0}$	& $c_{1,1}$	& $c_{1,2}$	& $c_{1,3}$ & $c_{1,4}$\\ \hline

     -&-&-& 1&0.5	& 0.263	& 0.227	& -0.220 &-0.028 	&-0.060
     \\\hline
     (1,0)	&	0	& 4	&	1.000 	& 0.486 & 0.273 & 0.219 & -0.219	 & -0.032 & -	0.073		\\
          	&	0.01	& 4	&	1.001 	& 0.463 & 0.274 & 0.219 	& -0.218	 & -0.031 & -0.073	\\
          	&	0.1	& 4	&	1.016	& 0.493 & 0.280 & 0.212 	& -0.224	 & -0.034 & -0.070		\\

     (0,1)	&	0	& 4	&	1.000	& 0.493 & 0.272 & 0.226 & -0.214	 & -0.027 & -	0.061		\\
          	&	0.01	& 4	&	1.000 	& 0.492 & 0.272 & 0.225 	& -0.213	 & -0.025 & -0.063	\\
          	&	0.1	& 4	&	0.995 	& 0.496 & 0.266 & 0.235 	& -0.220	 & -0.034 & -0.048		\\

          \hline
     \end{tabular}
     \caption{Example (a): numerical reconstructions of the $\Gamma_1$. The first row shows the best approximation of $\Gamma_1$ in polynomials space of order $4$.}
  \label{tab-a}
\end{table}

\subsection{Example (b): Identifying complicated curve with two frequencies data}
For the second example, we select a more complex crack
\be
\label{eq-nume-cracks-b}
\Gamma = \{(s, 0.5\cos(\pi s)+0.2 \sin(\pi s)-0.1\cos(3\pi s) + 0.1\sin(5\pi s)), s\in(-1,1)\}
\en
in order to show how to utilize multi-frequency data and get a better reconstruction.

In this example, we use far field patterns at frequencies $k=3, 9$. For the higher frequency $k=9$, the wavelength is $\lambda=2\pi/9$ and the crack is about 3 wavelengths in width. In our simulation, $\epsilon(3)=0.001$ and $\epsilon(9)=\max\{0.01, \delta/2\}$, where $\delta$ is the noise level.

In Figure \ref{fig-b-initial}, we display the initial guess and the reconstruction from data at a single frequency.
Figure \ref{fig-b-only3-10} shows the reconstruction with far field patterns at the lower frequency $k=3$, where $p=10$ is taken automatically by the stopping criteria.  Despite the bad initial guess, it captures the location well and captures roughly the shape.
Using higher frequency $k=9$ can not get a result automatically by the iterative process. Hence we fix $p=5$ and show the results in Figure \ref{fig-b-only9-5}, which is much worse than the results of $k=3$ in Figure \ref{fig-b-only3-5}. This may due to the fact that the reconstruction with higher frequency data depends heavily on a good initial guess.


In Figure \ref{fig-b-noise}, we utilize Algorithm \ref{alg-2} to take advantage of measurement at both frequencies $k=3,9$.
As expected, compared to Figures \ref{fig-b-only3-5} and \ref{fig-b-only3-10}, the quality of reconstruction is greatly improved.
Figures \ref{fig-b-noise001}-\ref{fig-b-noise005} show the results with different noise levels.
Meanwhile, we observe that the quality of the reconstruction deteriorates along the incident direction. Physically, the information from the "shadow region" is very weak, especially for high frequency waves. Obviously, such a defect is aggravated with the increase of the measurement noises.

Figures \ref{fig-b-01}-\ref{fig-b--10} show the reconstructions, respectively, with four different incident directions. This further verify the fact that the quality of the reconstruction deteriorates along the incident direction. To solve this problem, we use the measurements at two opposite incident directions. Figures \ref{fig-b-+-10}-\ref{fig-b-0+-1} verify the effectiveness of our technique.

\begin{figure}[h!]
    \centering
    \begin{tabular}{cc}
        \subfigure[Initial guess.]{
            \label{fig-b-initial}
            \includegraphics[width=.45\textwidth]{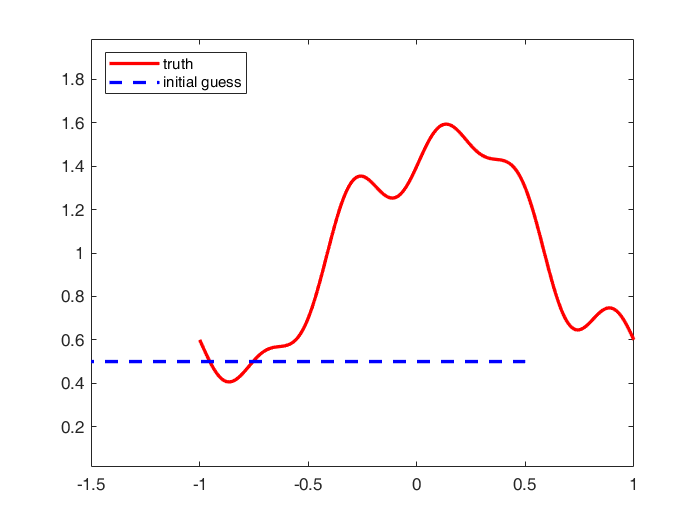}
        } \hspace{0em} &
        \subfigure[Final results of $p=10$ with data at only $k=3$.]{
            \label{fig-b-only3-10}
            \includegraphics[width=.45\textwidth]{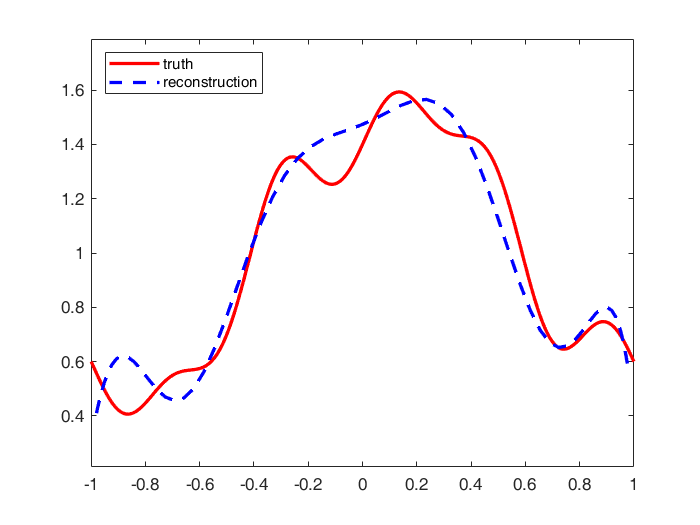}
        }\\
        \subfigure[Results of $p=5$ with data at only $k=3$.]{
            \label{fig-b-only3-5}
            \includegraphics[width=.45\textwidth]{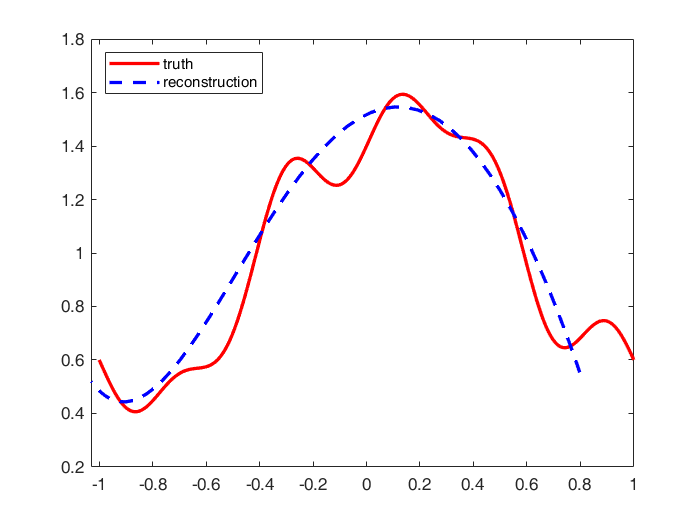}
        } \hspace{0em} &
        \subfigure[Results of $p=5$ with data at only $k=9$.]{
            \label{fig-b-only9-5}
            \includegraphics[width=.45\textwidth]{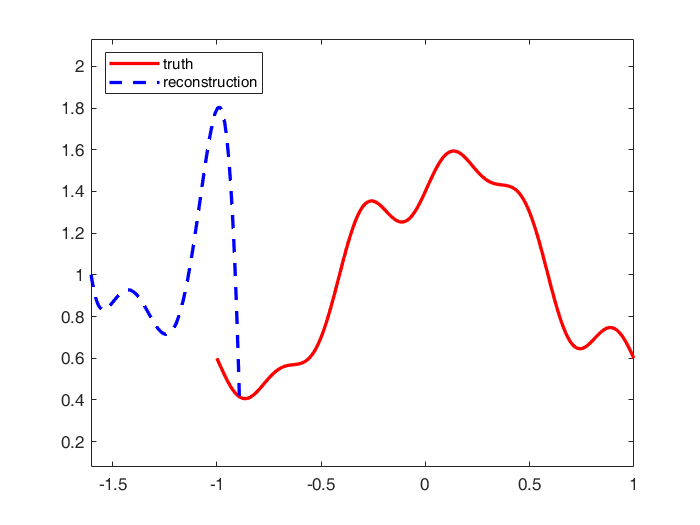}
        }
    \end{tabular}
    \caption{Example (b): the first row are the initial guess and the results with data at $k=3$. The second row compares reconstructions with low and high frequencies. Noise level $\delta=0.01$.}
    \label{fig-b-initial}
\end{figure}

\begin{figure}[h!]
    \centering
    \begin{tabular}{cc}
        \subfigure[$\delta=0$, $p=17$. ]{
            \label{fig-b-noise0}
            \includegraphics[width=.45\textwidth]{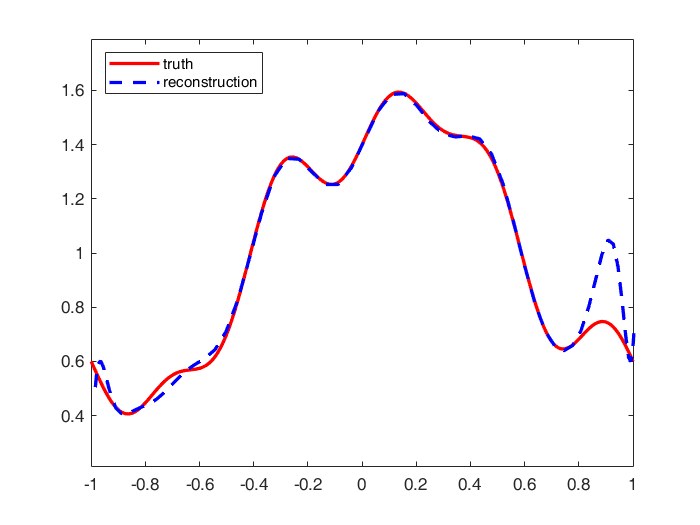}
        } \hspace{0em} &
        \subfigure[$\delta=0.01$, $p=16$.]{
            \label{fig-b-noise001}
            \includegraphics[width=.45\textwidth]{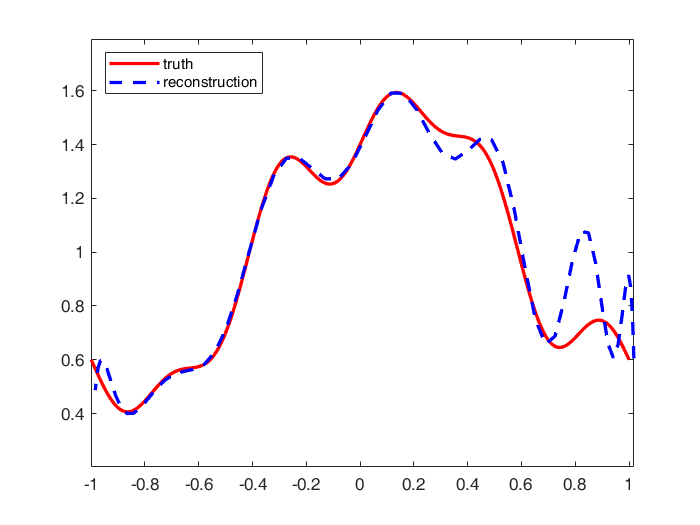}
        }\\
        \subfigure[$\delta=0.03$, $p=16$. ]{
            \label{fig-b-noise003}
            \includegraphics[width=.45\textwidth]{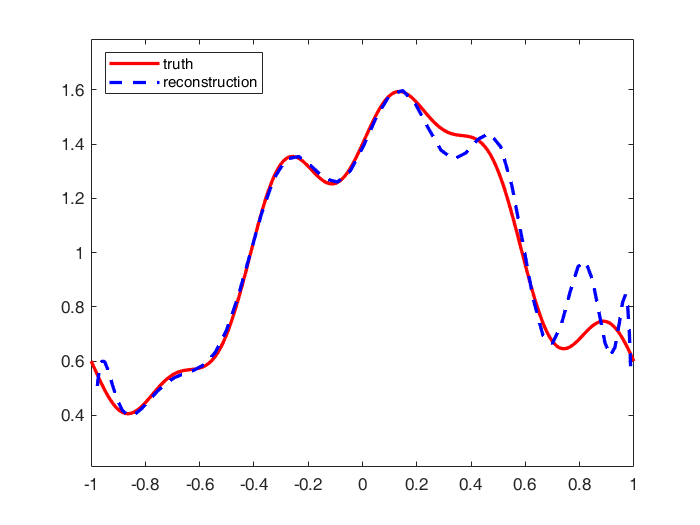}
        } \hspace{0em} &
        \subfigure[$\delta=0.05$, $p=15$.]{
            \label{fig-b-noise005}
            \includegraphics[width=.45\textwidth]{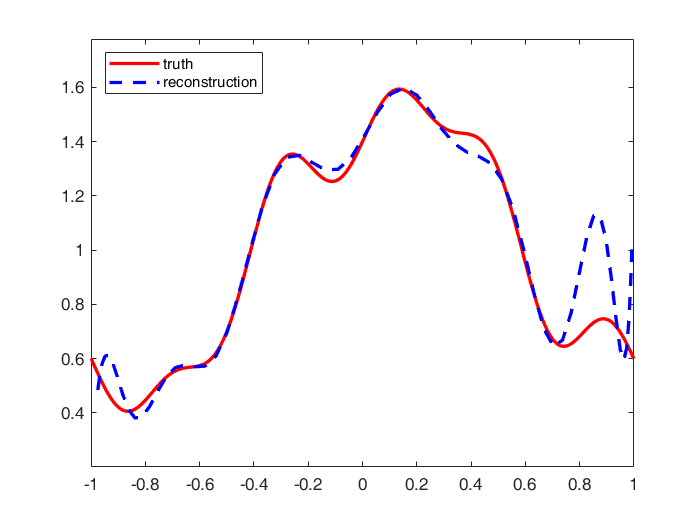}
        }
    \end{tabular}
    \caption{Example (b): results with noise at different noise level $\delta$.}
    \label{fig-b-noise}
\end{figure}

\begin{figure}[h!]
    \centering
    \begin{tabular}{cc}
        \subfigure[Incident direction (1,0), $p=15$. ]{
            \label{fig-b-01}
            \includegraphics[width=.45\textwidth]{figure/b-k9_15_005noise.png}
        } \hspace{0em} &
        \subfigure[Incident direction (0,1), $p=11$. ]{
            \label{fig-b-10}
            \includegraphics[width=.45\textwidth]{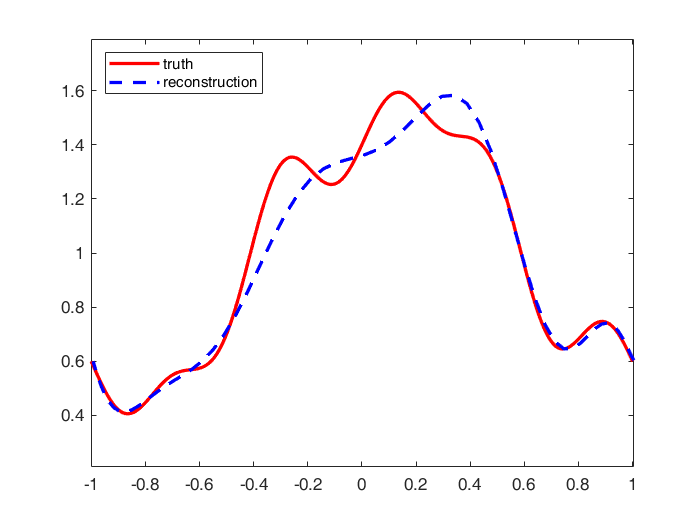}
        }\\
        \subfigure[Incident direction (-1,0), $p=15$. ]{
            \label{fig-b-0-1}
            \includegraphics[width=.45\textwidth]{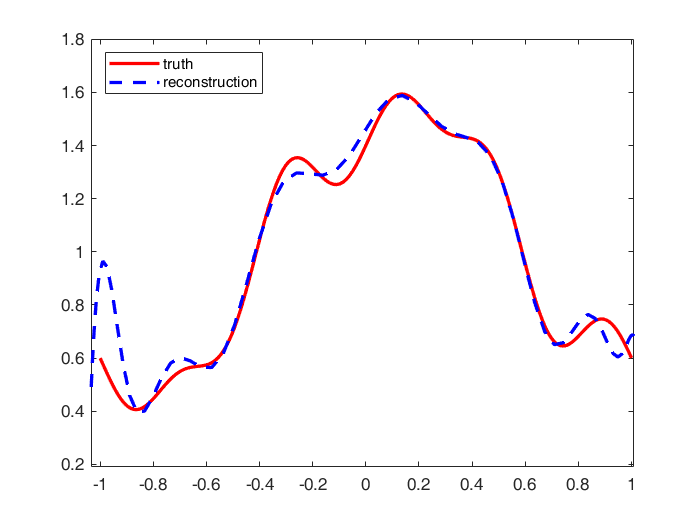}
        } \hspace{0em} &
        \subfigure[Incident direction (0,-1), $p=15$. ]{
            \label{fig-b--10}
            \includegraphics[width=.45\textwidth]{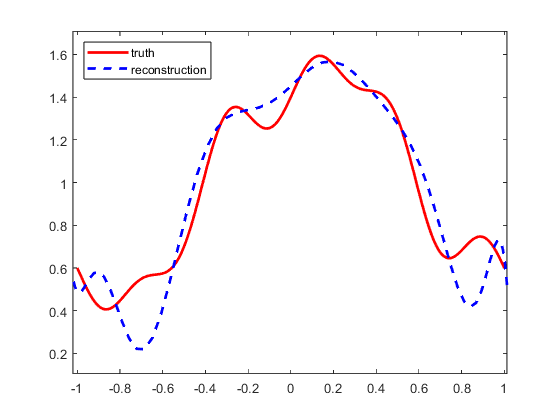}
        }\\
        \subfigure[Incident direction $(\pm 1,0)$, $p=17$. ]{
            \label{fig-b-+-10}
            \includegraphics[width=.45\textwidth]{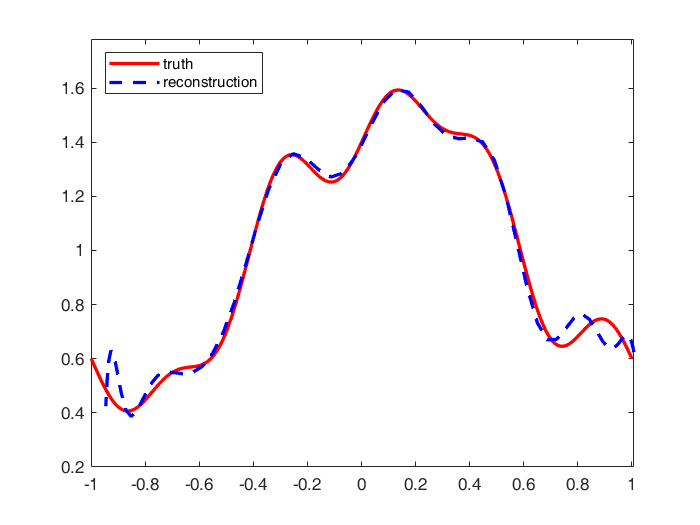}
        } \hspace{0em} &
        \subfigure[Incident direction $(0,\pm 1)$, $p=17$. ]{
            \label{fig-b-0+-1}
            \includegraphics[width=.45\textwidth]{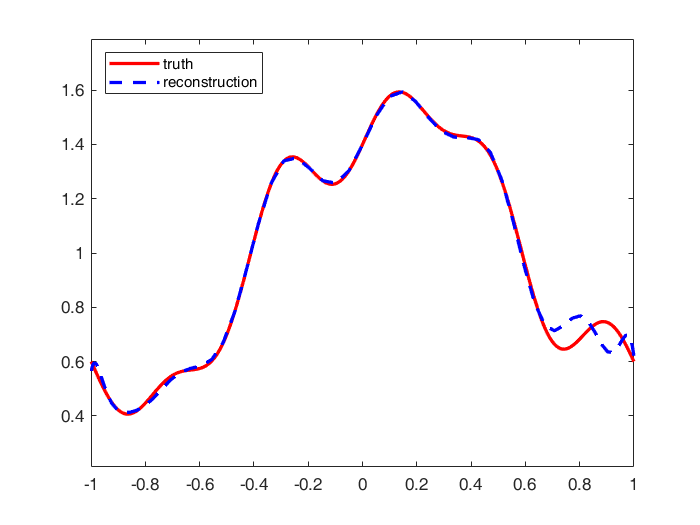}
        }
    \end{tabular}
    \caption{Example (b): The first two rows are results with data from different incident directions. The last row are results using data at two opposite directions. Noise level $\delta=0.05$.}
    \label{fig-b-more}
\end{figure}

\subsection{Initial guess without a prior knowledge}
As observed in the numerical simulations, the initial guess plays a key role in the reconstructions if the Newton type method is applied.
In this subsection, we employ the direct sampling method \cite{LiZou13} and the accelerated level-set method \cite{AudibertHL22}, which are expected to offer a fine initial guess for the modified Newton's method.
We do not use the level-set method of cracks in \cite{AlvaDIM09} because it is numerically unstable in our setting. It seems that cracks scattering is significantly different from the EIT problems discussed in \cite{AlvaDIM09}.

The direct sampling method \cite{LiZou13} is to calculate the indicator function
\be
I(z) := \left|
\sum_{j=1}^N u^\infty (\hat{x}_j)e^{ik\hat{x}\cdot z}
\right|,\quad z\in \mathbb{R}^2.
\en
In Figure \ref{fig-DSM}, we utilize this method for the two examples we studied in the previous subsections.

\begin{figure}[h!]
    \centering
    \begin{tabular}{cc}
        \subfigure[Example (a)]{
            \includegraphics[width=.45\textwidth]{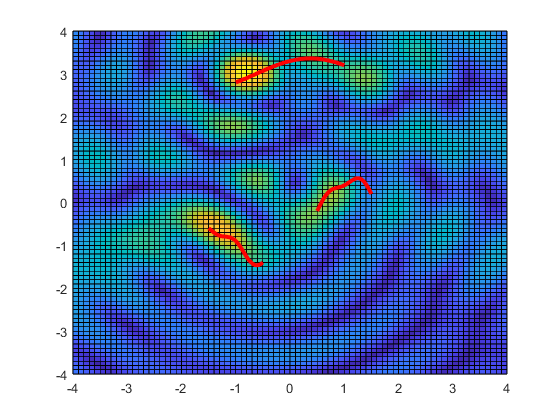}
        }
        \subfigure[Example (b)]{
            \includegraphics[width=.45\textwidth]{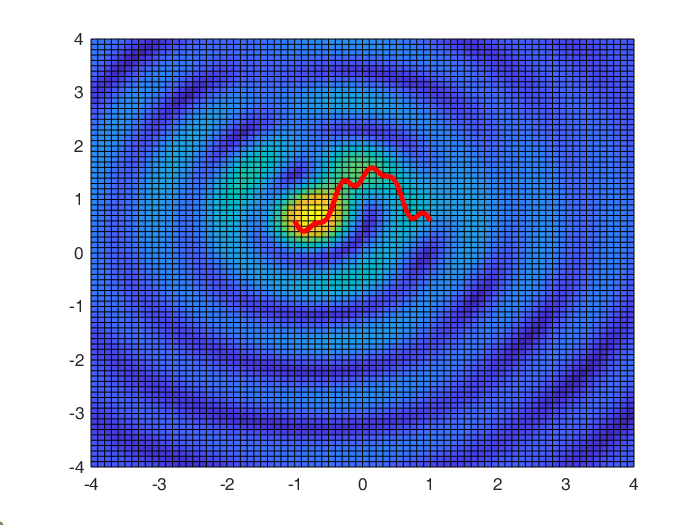}
        }
    \end{tabular}
    \caption{Direct sampling method: red lines are the true cracks.  Frequency $k=3$, noise level $\delta=0.1$.}
    \label{fig-DSM}
\end{figure}

The level-set method represents the obstacle by a level-set function $\phi$ by
\ben
D_\phi = \{x|\phi(x)<0\}.
\enn
Then $\phi$ is updated (from the knowledge of the residual error) until the cost function $J$ converges. The cost function is defined as
\ben
J(D_\phi) := \frac{1}{2N} \sum_{j=1}^N \left|u^\infty (\hat{x}_j)-\tilde{u}^\infty(\hat{x}_j)\right|^2,
\enn
where $\tilde{u}^\infty(x_j)$ is the far field pattern of the scattered field by $D_\phi$. It shows that
\ben
J'(D)(\theta) = \int_{\partial D} \frac{\partial u(y)}{\partial \nu}\frac{\partial w(y)}{\partial \nu}(\theta\cdot \nu)ds(y)
\enn
for $\theta \in C^1(\partial D, \mathbb{R}^2)$.
Here $u$ is the total field of $D$ from the plane wave $e^{ikx\cdot d}$ and $w$ is the total field of $D$ from incident field
\ben
w^i(x)  = \frac{C}{N} \sum_{j=1}^N \overline{u^\infty _D(x_j)-u^\infty (x_j)}e^{-ik\hat{x}_j\cdot x},
\enn
where $C=\mbox{exp}(i\pi/4)/\sqrt{8\pi k}$.
More details about the accelerated level-set method can be found in \cite{AudibertHL22}. Figure \ref{fig-LS} shows the reconstructions of three cracks by using level-set method.

\begin{figure}[h!]
    \centering
    \begin{tabular}{cc}
        \subfigure[Initial guess.]{
            \includegraphics[width=.45\textwidth]{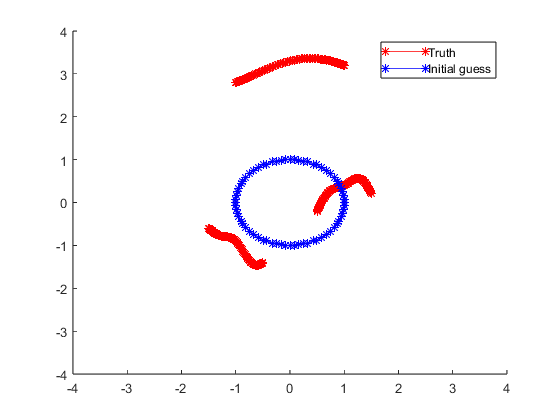}
        } \hspace{0em} &
        \subfigure[Residual $J$ over iterations.]{
            \includegraphics[width=.45\textwidth]{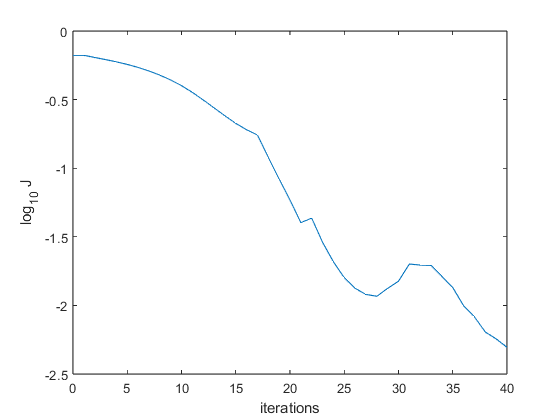}
        }\\
        \subfigure[$20$ iterations.]{
            \includegraphics[width=.45\textwidth]{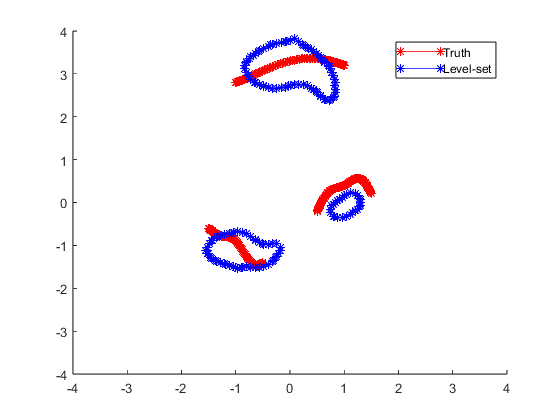}
        } \hspace{0em} &
        \subfigure[$40$ iterations.]{
            \includegraphics[width=.45\textwidth]{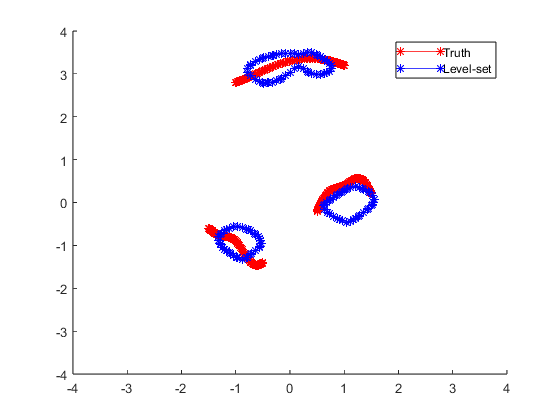}
        }
    \end{tabular}
    \caption{Level-set method: red lines are the true cracks.  Frequency $k=1$.}
    \label{fig-LS}
\end{figure}

In Figure \ref{fig-LS} we use far field data at frequency $k=1$ instead of $k=3$. We compare the results with different frequencies in Figure \ref{fig-LS2}. It shows that data at a higher frequency may miss the crack that is away from the initial guess.

Finally, we note that the validity of these methods has not been proved for our problems setting.
However, numerical experiments demonstrate that both methods offer a reasonable estimate of the actual cracks.

\begin{figure}[h!]
    \centering
    \begin{tabular}{cc}
        \subfigure[$k=1$, 40 iterations.]{
            \includegraphics[width=.45\textwidth]{figure/LS-40.png}
        } \hspace{0em} &
        \subfigure[$k=3$, 40 iterations.]{
            \includegraphics[width=.45\textwidth]{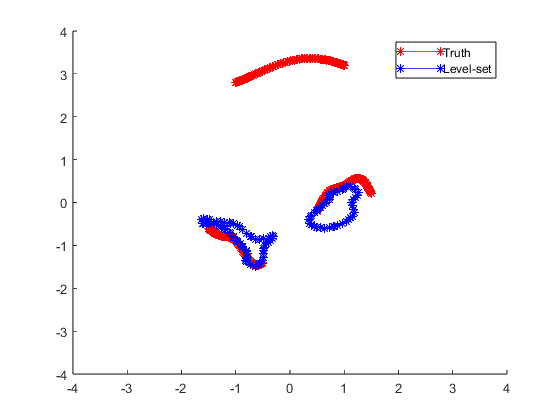}
        }
    \end{tabular}
    \caption{Level-set method wit different frequencies.}
    \label{fig-LS2}
\end{figure}


\section*{Acknowledgment}
The research of X. Liu is supported by the NNSF of China under grant 12371430 and Beijing Natural Science Foundation Z200003.

\setcounter{equation}{0}

\section{Appendix}
This section is devoted to two remarks on the uniqueness of the cracks from the multi-frequency far field patterns with a single incident direction. We show that uniqueness of the impedance cracks does not hold. However, the uniqueness for sound soft cracks still holds if the background is an unknown inhomogeneous medium.

\subsection{Non-uniqueness for the impedance cracks.}

Let $\mathcal{B}u=\frac{\partial u}{\partial \nu}+\lambda u$ on $\Gamma$ be the impedance boundary condition, where $u$ is the total field. We consider the uniqueness of the inverse problem from the far field at one fixed incident direction and all frequencies.

When the incident wave is a plane wave $u^i= e^{ikx\cdot d}$, we have
\ben
\mathcal{B}u^i = \left(\lambda+ik(\nu\cdot d)\right)u^i,
\enn
which will vanish on $\Gamma$ if $\nu\cdot d = -\frac{\lambda}{ik}$.

For sound hard cracks, $\lambda=0$. Let $d = (0,1)$, then $\mathcal{B}u^i$ will vanish on any line cracks with normal $\pm d^\perp$, for example, $\Gamma = \{0\}\times(-1,1)$. Consequently, there is no scattered field, and hence, no uniqueness for the inverse problem.

For general impedance cracks where $\lambda \neq 0$, the uniqueness remains unresolved. This is due to the fact that maximum principle argument in the proof of Theorem \ref{thm-unique} does not work.

\subsection{Uniqueness for sound soft cracks in an unknown inhomogeneous medium}

As stated in \cite{LiuLiu17}, an unknown inhomogeneous medium will not affect the uniqueness of the cracks.
Let $\Sigma$ be the cracks defined in the introduction section, and let $V\in L^\infty \left(\mathbb{R}^2\backslash\overline{\Sigma}\right)$ be an inhomogeneous medium with compact support $\mbox{supp } V\subset \Omega$, where $\Omega$ is a bounded open set. The scattering of incident field $u^i$ is formulated as
\begin{equation}
\label{eq-scattering-V}
    \left\{
\begin{aligned}
    &\Delta u + k^2(1+V) u = 0 &\hbox{in }\mathbb{R}^2\backslash \overline{\Sigma} \\
    &u = 0                 &\hbox{on } \Sigma \\
    & \lim_{r\to\infty} \sqrt{r}\left(\frac{\partial (u-u^i) }{\partial r} - ik (u-u^i) \right)=0        &\hbox{}
\end{aligned}
\right..
\end{equation}

The difference between \eqref{eq-scattering-V} and those in \cite{LiuLiu17} is that cracks has  singularities at the endpoints. Fortunately, thanks to the idea due to Heinz \cite[Theorem 3.9]{CK2019}, the uniqueness of the direct problem still holds. We present the result without proof as follows.
\begin{theorem}
\label{thm-uniqueness4DP-V}
    The Dirichlet problem \eqref{eq-scattering-V} has at most one solution.
\end{theorem}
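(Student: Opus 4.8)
The plan is to follow the classical uniqueness scheme for exterior Dirichlet problems, adapted to accommodate the singularities at the crack tips. Suppose $u_1$ and $u_2$ are two solutions of \eqref{eq-scattering-V} for the same incident field $u^i$, and set $w := u_1 - u_2$. Then $w\in H^1_{loc}(\R^2\ba\ov{\Sigma})$ is a radiating solution of $\Delta w + k^2(1+V)w = 0$ in $\R^2\ba\ov{\Sigma}$ with $w = 0$ on $\Sigma$, and the goal is to show $w\equiv 0$.

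First I would fix a large ball $B_R$ whose interior contains $\ov{\Om}\cup\ov{\Sigma}$, and apply Green's first identity to $w$ on the perforated domain $B_R\ba(\ov{\Sigma}\cup\bigcup_{\ell} D_\ell^\rho)$, where the $D_\ell^\rho$ are small discs of radius $\rho$ centred at the $2n$ endpoints of the cracks. Since $w=0$ on both faces of $\Sigma$, the crack contributions drop out, and taking the imaginary part of the identity one is left with
\ben
\mathrm{Im}\int_{\pa B_R}\ov{w}\,\frac{\pa w}{\pa\nu}\,{\rm d}s - \sum_\ell \mathrm{Im}\int_{\pa D_\ell^\rho}\ov{w}\,\frac{\pa w}{\pa\nu}\,{\rm d}s = \int_{B_R\ba(\,\cdots)} k^2\,\mathrm{Im}(V)\,|w|^2\,{\rm d}x.
\enn
The volume term on the right either vanishes (when $V$ is real-valued) or carries a definite sign (when $\mathrm{Im}\,V\ge 0$), which is exactly what is needed to run Rellich's lemma afterwards.

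The main obstacle, and the precise reason for invoking the technique of Heinz \cite[Theorem 3.9]{CK2019}, is to justify that the endpoint integrals over the shrinking circles $\pa D_\ell^\rho$ vanish as $\rho\to 0$. Near a crack tip the solution exhibits the characteristic square-root singularity, so that $w$ remains bounded while $\nabla w$ blows up like $\rho^{-1/2}$; consequently a crude estimate of $\int_{\pa D_\ell^\rho}\ov{w}\,\pa_\nu w\,{\rm d}s$ is only $O(\rho^{1/2})$, and one must verify this limit carefully from the $H^1_{loc}$ regularity of $w$ together with the precise asymptotic behaviour of the single-layer representation \eqref{eq-sys2-1} at the endpoints. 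This is the step where the naive argument for smooth obstacles fails and where Heinz's idea supplies the estimate controlling the edge contributions.

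Once the endpoint terms are shown to disappear, the identity reduces to $\mathrm{Im}\int_{\pa B_R}\ov{w}\,\pa_\nu w\,{\rm d}s\le 0$, and an application of Rellich's lemma \cite{CK2019} together with the Sommerfeld radiation condition forces $w = 0$ in $\R^2\ba\ov{B_R}$. Finally, since $V\in L^\infty$ ensures the unique continuation property for $\Delta w + k^2(1+V)w = 0$, the vanishing of $w$ on an open set propagates throughout the connected domain $\R^2\ba\ov{\Sigma}$, giving $w\equiv 0$ and hence $u_1 = u_2$. I expect the edge-singularity estimate in the third paragraph to be the only nonroutine part; the remaining steps are standard once that limit is secured.
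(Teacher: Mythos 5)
Your argument is essentially the proof the paper intends: the theorem is stated there without proof, with the remark that uniqueness follows ``thanks to the idea due to Heinz'' \cite[Theorem 3.9]{CK2019}, i.e.\ exactly the Green's-identity/Rellich scheme in which Heinz's device is used to control the boundary contributions where the solution fails to be smooth --- which is precisely your treatment of the shrinking circles at the crack tips (and your $O(\rho^{1/2})$ bound for those integrals is the right estimate). The one point worth making explicit is that the conclusion needs $V$ real-valued or $\mathrm{Im}\,V\ge 0$, so that $\mathrm{Im}\int_{\partial B_R} w\,\overline{\partial w/\partial\nu}\,\mathrm{d}s\ge 0$ and Rellich's lemma applies; the paper leaves this assumption implicit, and your proposal correctly flags it.
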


Define the volume potential
\begin{equation*}
 ( \mathcal{G}_V\phi) (x) := \int_{\Omega\backslash\overline{\Sigma}} \Phi_k(x,y) \phi(y) {\rm d}y, \quad x\in \mathbb{R}^2,
\end{equation*}
which is bounded from $L^2(\Omega\backslash\overline{\Sigma})$ to $H^2_{loc}(\Omega)$.
We also define the restriction of $\mathcal{G}_V$ to the cracks $\Sigma$ by
\begin{equation*}
 ( {G}_V\phi) (x):= \int_{\Omega\backslash\overline{\Sigma}} \Phi_k(x,y) \phi(y) {\rm d}y, \quad x\in \Sigma.
\end{equation*}
Moreover, $G_V:L^2(\Omega\backslash\overline{\Sigma})\to  H^{1}(\Sigma)$ is compact.

Then we can represent the scattered field as the sum of single layer potential and volume potential in $\Omega$.
\begin{theorem}
\label{thm-int-representation-V}
Let $u\in H^1_{loc}(\mathbb{R}^2\backslash\overline{\Sigma})$ be a solution to \eqref{eq-scattering-V} for incident field $u^i$. Then the pair  $(u|_{\Omega\backslash\overline{\Sigma}}, \phi)\in L^2(\Omega\backslash\overline{\Sigma})\times \mathcal{X}$ solves the following integral system
\be
\label{eq-sys-1-V}
u &=& u^i + k^2 \mathcal{G}_V u + \mathcal{S}\left(W-\frac{2\pi}{\ln k}I\right)\phi, \quad \mbox{in } \Omega\backslash\overline{\Sigma},\\
\label{eq-sys-2-V}
0 &=& u^i + k^2 {G}_Vu + \mathrm{S}\left(W-\frac{2\pi}{\ln k}I\right)\phi,\quad \mbox{on } \Sigma.
\en
Furthermore, the system of integral equation \eqref{eq-sys-1-V}-\eqref{eq-sys-2-V} is uniquely solvable.
\end{theorem}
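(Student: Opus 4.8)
The plan is to prove the two assertions of Theorem \ref{thm-int-representation-V} separately: first, that any solution $u$ of \eqref{eq-scattering-V} yields a solution $(u|_{\Omega\ba\overline{\Sigma}},\phi)$ of the coupled system \eqref{eq-sys-1-V}--\eqref{eq-sys-2-V}; and second, that this system is uniquely solvable. The two building blocks are (i) the single-layer representation for the pure crack problem from Theorem \ref{thm-int-representation2}, in particular the invertibility of $\mathrm{S}(W-\frac{2\pi}{\ln k}I):\mathcal{X}\to H^1(\Sigma)$, and (ii) the mapping properties of the volume potential already recorded above, namely $\mathcal{G}_V:L^2(\Omega\ba\overline{\Sigma})\to H^2_{loc}(\Omega)$ bounded and $G_V:L^2(\Omega\ba\overline{\Sigma})\to H^1(\Sigma)$ compact.

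For the representation direction, I would first split off the medium contribution. The total field satisfies $\Delta u+k^2u=-k^2Vu$ in $\R^2\ba\overline{\Sigma}$, so the difference $w:=u-u^i-k^2\mathcal{G}_Vu$ (the part not accounted for by the incident wave and the Lippmann--Schwinger volume term) is a radiating solution of the homogeneous Helmholtz equation in the exterior of the cracks, with trace $w|_{\Sigma}=-(u^i+k^2G_Vu)$ on $\Sigma$ because $u=0$ there. Applying the crack representation of Theorem \ref{thm-int-representation2} produces $\phi\in\mathcal{X}$ with $w=\mathcal{S}(W-\frac{2\pi}{\ln k}I)\phi$; taking the trace on $\Sigma$ and using the invertibility of $\mathrm{S}(W-\frac{2\pi}{\ln k}I)$ gives exactly the boundary equation \eqref{eq-sys-2-V}, while the volume equation \eqref{eq-sys-1-V} is the identity $u=u^i+k^2\mathcal{G}_Vu+w$ restricted to $\Omega\ba\overline{\Sigma}$.

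For unique solvability I would cast \eqref{eq-sys-1-V}--\eqref{eq-sys-2-V} as a single operator equation $M(u,\phi)^{T}=(u^i,-u^i)^{T}$ on $L^2(\Omega\ba\overline{\Sigma})\times\mathcal{X}$, where $M$ splits as $M=M_0+K$. The leading part $M_0=\mathrm{diag}(I,\,\mathrm{S}(W-\frac{2\pi}{\ln k}I))$ is an isomorphism onto $L^2(\Omega\ba\overline{\Sigma})\times H^1(\Sigma)$ by the invertibility in (i). The remainder $K$ collects the entries $-k^2\mathcal{G}_V$, $-\mathcal{S}(W-\frac{2\pi}{\ln k}I)$ (viewed as a map into $L^2(\Omega\ba\overline{\Sigma})$), and $k^2G_V$, each compact thanks to the smoothing of the two potentials and the compact embedding $H^2_{loc}(\Omega)\hookrightarrow L^2(\Omega\ba\overline{\Sigma})$. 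Hence $M$ is Fredholm of index zero, and unique solvability reduces to injectivity. To prove injectivity I would, given a solution of the homogeneous system ($u^i=0$), reconstruct the field $\tilde u:=k^2\mathcal{G}_Vu+\mathcal{S}(W-\frac{2\pi}{\ln k}I)\phi$ on $\R^2\ba\overline{\Sigma}$; the homogeneous equations force $\tilde u=u$ in $\Omega\ba\overline{\Sigma}$ and $\tilde u=0$ on $\Sigma$, while the potentials guarantee that $\tilde u$ is radiating and satisfies $\Delta\tilde u+k^2(1+V)\tilde u=0$ in $\R^2\ba\overline{\Sigma}$. The uniqueness of the direct problem, Theorem \ref{thm-uniqueness4DP-V}, then gives $\tilde u\equiv0$, whence $u=0$ and $\mathcal{S}(W-\frac{2\pi}{\ln k}I)\phi=0$; taking the trace and inverting $\mathrm{S}(W-\frac{2\pi}{\ln k}I)$ yields $\phi=0$.

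I expect the main obstacle to be the functional-analytic bookkeeping around the crack: verifying that each coupling operator is compact in the weighted space $\mathcal{X}$ and into $H^1(\Sigma)$ in the presence of the endpoint singularities, and checking that the off-diagonal single-layer term really does map $\mathcal{X}$ compactly into $L^2(\Omega\ba\overline{\Sigma})$. The injectivity step is conceptually clean but hinges on confirming that the reconstructed $\tilde u$ reproduces the correct inhomogeneous Helmholtz equation (with the medium term carrying the right weight $V$) together with the radiation condition, so that Theorem \ref{thm-uniqueness4DP-V} applies; this is precisely where the definition of the volume potential must be matched to the governing equation.
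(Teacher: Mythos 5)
Your proposal is correct and follows essentially the same route as the paper: both cast \eqref{eq-sys-1-V}--\eqref{eq-sys-2-V} as an ``isomorphism plus compact'' operator equation (Fredholm of index zero) and obtain injectivity by feeding the homogeneous solution into Theorem \ref{thm-uniqueness4DP-V} and then recovering $\phi=0$ from the vanishing of the single-layer potential. The only cosmetic difference is that the paper changes variables to $\tilde\phi=(W-\frac{2\pi}{\ln k}I)\phi$ so that the invertible leading block is $\mathrm{diag}(-I,\mathrm{S}_0)$ with $\mathrm{S}-\mathrm{S}_0$ absorbed into the compact part (and then uses the normal-derivative jump relation to get $\phi=0$), whereas you keep $\mathrm{S}(W-\frac{2\pi}{\ln k}I)$ as the leading block and invert it on the trace.
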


\begin{proof}
First we verify that solution $u\in L^2(\Omega\backslash\overline{\Sigma})$ of \eqref{eq-sys-1-V}-\eqref{eq-sys-2-V} is the solution of \eqref{eq-scattering-V}. Actually we extend $u$ into $\mathbb{R}^2\backslash\overline{\Sigma}$ by the right hand side of \eqref{eq-sys-1-V}. By the mapping property of $\mathcal{G}_V$ and $\mathcal{S}$, $u$ belongs to $H^1_{loc}(\mathbb{R}^2\backslash \overline{\Sigma})\cap H^1(\Sigma)$ and is a radiating solution. Moreover, the boundary condition is satisfied from \eqref{eq-sys-2-V}.

Now we prove that \eqref{eq-sys-1-V}-\eqref{eq-sys-2-V} have exactly one solution $(u,\phi)\in L^2(\Omega\backslash\overline{\Sigma})\times \mathcal{X}$. The system \eqref{eq-sys-1-V}-\eqref{eq-sys-2-V} can be rewritten in a matrix form
\be
\label{eq-sys-matrix}
(\mathscr{A}+\mathscr{B})U = F
\en
with
\ben
\mathscr{A}=
\begin{pmatrix}
-I &0\\
0 &\mathrm{S}_0
\end{pmatrix},\quad
\mathscr{B}=
\begin{pmatrix}
k^2\mathcal{G}_V & \mathcal{S}\\
k^2{G}_V&\mathrm{S}-\mathrm{S}_0
\end{pmatrix},\quad U = \begin{pmatrix}u, \tilde{\phi} \end{pmatrix},\quad F = \begin{pmatrix}-u^i\\-u^i \end{pmatrix},
\enn
where $\tilde{\phi}= (W-\frac{2\pi}{\ln k}I)\phi$. Note that $\phi$ is uniquely determined by $\tilde{\phi}$.

It is known that $\mathscr{A}$ is bounded and has a bounded inverse because Kress \cite{Kress95} prove that $S_0$ has a bounded inverse from $H^1(\Sigma)$ to $L^2(\Sigma)$. Furthermore, $\mathscr{B}$ is compact because $H^1_{loc}(\Omega), H^1_{loc}(\Omega\backslash\overline{\Sigma})$ can be compactly embedded into $L^2(\Omega\backslash\overline{\Sigma})$ and that $\mathrm{S}-\mathrm{S}_0$ is compact from $\mathcal{X}$ to $H^1(\Sigma)$.
Thus $\mathscr{A}+\mathscr{B}$ is a Fredholm operator. To prove the Theorem, it suffice to prove the uniqueness of \eqref{eq-sys-matrix}.

 Let $u^i=0$, then Theorem \ref{thm-uniqueness4DP-V} implies that $u=0$ in $\mathbb{R}^2\backslash \overline{\Sigma}$. Hence $v:=\mathcal{S}\phi=0$ in $\Omega\backslash\overline{\Sigma}$. Now by jump relation in the $L^p$ sense \cite[p.100]{Kress14}
 \ben
 \phi = \frac{\partial v_-}{\partial \nu}-\frac{\partial v_+}{\partial \nu} =0.
 \enn
The proof is finished.
\end{proof}

Note that the volume potential term possesses a $k^2$ factor, which should not change the low frequency expansion \eqref{eq-asym-us}. We prove the expansion in the following by carefully tackling the inhomogeneous term as in \cite{LiuLiu17}.
\begin{theorem}
Let $u^i=e^{ik x\cdot d}$ for some fixed incident direction $d$. Then the solution $u$ of problem \eqref{eq-scattering-V} has the following low frequency behavior
\ben
\begin{aligned}
u   &= u^i  + \mathcal{S} \left(W-\frac{2\pi}{\ln{k}}I\right) \phi +\mathcal{G}_V u\\
        &= - \frac{2\pi}{|\Sigma|\ln{k}} \left[(\mathcal{S}_0 {W} +{L}){A}^{-1}\mathrm{S}_0(1)-\mathcal{S}_0(1)\right]  + O\left(\frac{1}{(\ln k)^{2}}\right),\quad  k\to 0,\quad x\in \mathbb{R}\backslash\overline{\Sigma},
\end{aligned}
\enn
where $A:=\mathrm{S}_0W+L$ and $\mathcal{A}=\mathcal{S}_0W+L$.
\end{theorem}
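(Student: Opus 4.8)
The plan is to read off the second identity directly from the integral system \eqref{eq-sys-1-V}--\eqref{eq-sys-2-V}, treating the volume term $k^2\mathcal{G}_V u$ as a genuinely higher-order perturbation of the homogeneous-background problem already analysed in Theorem \ref{u_lowfrequency}. The first equality is nothing but the representation of Theorem \ref{thm-int-representation-V}, extended from $\Omega\backslash\overline{\Sigma}$ to all of $\mathbb{R}^2\backslash\overline{\Sigma}$ through the right-hand side of \eqref{eq-sys-1-V}, so the whole issue is the behaviour as $k\to0$.

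First I would set $\mathcal{B}_k:=\mathcal{S}(W-\tfrac{2\pi}{\ln k}I)$ and $\mathrm{B}_k:=\mathrm{S}(W-\tfrac{2\pi}{\ln k}I)$ and record, exactly as in the derivation preceding Theorem \ref{u_lowfrequency}, that the logarithmic singularity built into $\Phi_k$ (see \eqref{eq-asym-Phik}) cancels against the factor $W-\tfrac{2\pi}{\ln k}I$ because $LW=0$; this yields $\mathrm{B}_k=A-\tfrac{2\pi}{\ln k}(\mathrm{S}_0+CL)+O(k^2\ln k)$ and $\mathcal{B}_k=\mathcal{A}-\tfrac{2\pi}{\ln k}(\mathcal{S}_0+CL)+O(k^2\ln k)$. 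In particular $\mathrm{B}_k^{-1}=A^{-1}+O(1/\ln k)$ and $\mathcal{B}_k$ are \emph{uniformly bounded} as $k\to0$ on the relevant spaces. Solving \eqref{eq-sys-2-V} for the density gives $\phi=-\mathrm{B}_k^{-1}(u^i+k^2 G_V u)$, and substituting into \eqref{eq-sys-1-V} eliminates $\phi$ and produces the closed equation
\[
M_k u := \big(I+k^2(\mathcal{B}_k\mathrm{B}_k^{-1}G_V-\mathcal{G}_V)\big)u = u^i-\mathcal{B}_k\mathrm{B}_k^{-1}u^i =: u_{\mathrm{hom}}
\]
on $L^2(\Omega\backslash\overline{\Sigma})$, where $u_{\mathrm{hom}}$ is precisely the total field of the crack problem in a \emph{homogeneous} background ($V=0$).

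Next I would invert $M_k$. Writing $M_k=I+k^2 R_k$ with $R_k:=\mathcal{B}_k\mathrm{B}_k^{-1}G_V-\mathcal{G}_V$, the boundedness of $G_V$ and $\mathcal{G}_V$ together with the uniform bounds on $\mathrm{B}_k^{-1}$ and $\mathcal{B}_k$ give $\sup_{0<k<k_0}\|R_k\|<\infty$. Hence for $k$ small enough a Neumann series yields $M_k^{-1}=I+O(k^2)$, so that $u=u_{\mathrm{hom}}+O(k^2)\|u_{\mathrm{hom}}\|_{L^2}$. Applying Theorem \ref{u_lowfrequency} to $u_{\mathrm{hom}}$ supplies both its leading term $-\tfrac{2\pi}{|\Sigma|\ln k}[(\mathcal{S}_0W+L)A^{-1}\mathrm{S}_0(1)-\mathcal{S}_0(1)]$ and the bound $\|u_{\mathrm{hom}}\|_{L^2}=O(1/\ln k)$; since $k^2\cdot O(1/\ln k)=O(k^2/\ln k)=o(1/(\ln k)^2)$, the volume correction is absorbed into the claimed $O(1/(\ln k)^2)$ remainder. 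Finally, because $u$ on $\mathbb{R}^2\backslash\overline{\Sigma}$ is recovered from $u|_{\Omega\backslash\overline{\Sigma}}$ and $\phi$ through the smooth potentials in \eqref{eq-sys-1-V}, the same expansion propagates to every compact subset of $\mathbb{R}^2\backslash\overline{\Sigma}$.

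The main obstacle, and the step deserving most care, is the uniform-in-$k$ control of the coupled system near $k=0$: one must verify that the logarithmic blow-up of $\mathcal{S}_k$ and $\mathrm{S}_k$ is exactly neutralised by the operator $W-\tfrac{2\pi}{\ln k}I$ (so that $\mathrm{B}_k^{-1}$ does not degenerate), and one must resolve the apparent circularity that $u$ appears on both sides through $k^2 G_V u$. The Fredholm structure established in Theorem \ref{thm-int-representation-V}, combined with the $k^2$ prefactor, is what lets the Neumann-series inversion of $M_k$ go through and guarantees that the inhomogeneity perturbs only the $O(1/(\ln k)^2)$ tail and never the $O(1/\ln k)$ leading term.
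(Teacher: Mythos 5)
Your argument is correct, but it is organized differently from the paper's. The paper expands the full $2\times 2$ operator matrix of the system \eqref{eq-sys-matrix} using \eqref{eq-asym-Phik}, inverts the leading block explicitly, and applies a Neumann series in $1/\ln k$ to the whole coupled system, re-deriving the coefficient of $1/\ln k$ from scratch; the volume terms $k^2\mathcal{G}_V$ and $k^2 G_V$ are simply swept into the $O(k^2\ln k)$ remainder of that matrix expansion, and only \emph{afterwards} does the paper observe that the resulting $1/\ln k$ coefficient coincides with the one in \eqref{eq-asym-us}. You instead eliminate the density $\phi$ first, recognize $u^i-\mathcal{B}_k\mathrm{B}_k^{-1}u^i$ as the total field of the homogeneous-background crack problem, and show $u=u_{\mathrm{hom}}+O(k^2)$ by a Neumann series in the small operator $k^2R_k$, after which Theorem \ref{u_lowfrequency} can be quoted verbatim. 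Your reduction is more economical and makes structurally transparent \emph{why} the medium $V$ cannot perturb the $O(1/\ln k)$ term, rather than verifying it by comparing coefficients. One imprecision to fix: $\mathcal{G}_V$ and $G_V$ are \emph{not} uniformly bounded as $k\to 0$, because their kernel $\Phi_k$ carries the term $-\frac{1}{2\pi}\ln k$ from \eqref{eq-asym-Phik}; a priori $\|R_k\|=O(|\ln k|)$ rather than $O(1)$ (the logarithms do in fact cancel in $\mathcal{B}_k\mathrm{B}_k^{-1}G_V-\mathcal{G}_V$ because $\mathcal{A}A^{-1}$ fixes constants, but you do not prove this). The slip is harmless: $\|k^2R_k\|=O(k^2|\ln k|)\to 0$ still yields $M_k^{-1}=I+O(k^2|\ln k|)$, and $k^2|\ln k|\cdot\|u_{\mathrm{hom}}\|_{L^2}=O(k^2)$ is absorbed into the $O\left(1/(\ln k)^{2}\right)$ tail exactly as you claim; you should simply state the bound on $R_k$ with the logarithm rather than asserting uniform boundedness.
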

\begin{proof}
By the asymptotic expansion of the fundamental solution \eqref{eq-asym-Phik}, the left hand side of the \eqref{eq-sys-matrix} has the expansion
\ben
\begin{aligned}
&\left(
\begin{pmatrix}
-I& \mathcal{S}_0\\
0 & \mathrm{S}_0
\end{pmatrix}+\ln k
\begin{pmatrix}
0& -2\pi L\\
0 & -2\pi L
\end{pmatrix}+ C
\begin{pmatrix}
0&  L\\
0 & L
\end{pmatrix}+
O(k^2\ln k)
\right)
\begin{pmatrix}
I& 0\\
0 & W-\frac{2\pi}{\ln k}L
\end{pmatrix}
\begin{pmatrix}
u\\
\phi
\end{pmatrix}\\
&=
\left(
\begin{pmatrix}
-I& \mathcal{S}_0W+L\\
0 & \mathrm{S}_0W+L
\end{pmatrix}-\frac{2\pi}{\ln k}
\begin{pmatrix}
0& \mathcal{S}_0+C L\\
0 & \mathrm{S}_0+C L
\end{pmatrix}+
O(k^2\ln k)
\right)
\begin{pmatrix}
u\\
\phi.
\end{pmatrix}
\end{aligned}
\enn

Note that
\ben
\begin{pmatrix}
-I& \mathcal{S}_0W+L\\
0 & \mathrm{S}_0W+L
\end{pmatrix}^{-1} =
\begin{pmatrix}
-I& \mathcal{A}A^{-1}\\
0 & A^{-1}
\end{pmatrix}.
\enn
By the Neumann series, we have
\ben
\begin{aligned}
&
\left(
\begin{pmatrix}
-I& \mathcal{S}_0W+L\\
0 & \mathrm{S}_0W+L
\end{pmatrix}-\frac{2\pi}{\ln k}
\begin{pmatrix}
0& \mathcal{S}_0+C L\\
0 & \mathrm{S}_0+C L
\end{pmatrix}+
O(k^2\ln k)
\right)^{-1}\\
&=\left(I-\frac{2\pi}{\ln k}
\begin{pmatrix}
0& \mathcal{A}A^{-1}(\mathrm{S}_0+CL)-\mathcal{S}_0+C L\\
0 & A^{-1}(\mathrm{S}_0+C L)
\end{pmatrix}+
O(k^2\ln k)\right)^{-1}
\begin{pmatrix}
-I& \mathcal{A}A^{-1}\\
0 & A^{-1}
\end{pmatrix}\\
&=\left(
I+ \frac{2\pi}{\ln k}
\begin{pmatrix}
0& \mathcal{A}A^{-1}(\mathrm{S}_0+CL)-\mathcal{S}_0+C L\\
0 & A^{-1}(\mathrm{S}_0+C L)
\end{pmatrix}+
O\left(\frac{1}{\ln^2k}\right)
\right)
\begin{pmatrix}
-I& \mathcal{A}A^{-1}\\
0 & A^{-1}
\end{pmatrix}.
\end{aligned}
\enn
For simplicity, we abuse the notation $I$ as identical mapping on any function space.

Moreover, the plane wave admits the expansion $u^i(x) = 1+kx\cdot d +O(k^2)$ on $\Sigma$. Then the results follows by straightforward calculation.
\end{proof}

Comparing this with \eqref{eq-asym-us} of Theorem \ref{u_lowfrequency}, we observe that the $\frac{1}{\ln k}$ terms are exactly the same, and therefore we have the following uniqueness result even the background is an unknown inhomogeneous medium.
\begin{theorem}
\label{thm-unique-V}
Assume $(\Sigma,V)$ and $(\hat{\Sigma},\hat{V})$ are two collections of cracks and medium which produce same far field for incident field patterns $u^i = e^{ik x\cdot d}$ with $k = \{k_j|j = 1,2,\cdots, k_j \to 0\}$ and fixed $d$. Then $\Sigma=\hat{\Sigma}$.
\end{theorem}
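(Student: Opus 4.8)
The plan is to reduce the inhomogeneous case to the homogeneous argument of Theorem~\ref{thm-unique}, exploiting the fact that the preceding low-frequency expansion produces \emph{exactly} the same leading $1/\ln k$ coefficient as in \eqref{eq-asym-us}, with no dependence on the medium $V$. Concretely, I would fix a bounded open set $\Omega$ containing $\mathrm{supp}\,V\cup\mathrm{supp}\,\hat V$ and let $G$ be the unbounded connected component of $\R^2\ba(\overline{\Omega}\cup\overline{\Sigma}\cup\overline{\hat\Sigma})$. In $G$ both total fields solve the free Helmholtz equation, so the assumed equality of the far field patterns together with Rellich's lemma and analytic continuation gives $u=\hat u$ throughout $G$ for every admissible frequency $k_j$.

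Second, I would define the harmonic functions $v$ and $\hat v$ by the very same formulas \eqref{eq-uniq-u} as in the proof of Theorem~\ref{thm-unique}; these depend only on $\Sigma$ (resp. $\hat\Sigma$) and not on the medium. Since the low-frequency behaviour established above has leading term $-\frac{2\pi}{\ln k}v$ in $\R^2\ba\overline{\Sigma}$ (and $-\frac{2\pi}{\ln k}\hat v$ in $\R^2\ba\overline{\hat\Sigma}$), and since $G$ lies in both exteriors, multiplying the identity $u-\hat u=0$ on $G$ by $\ln k$ and letting $k_j\to0$ forces $v=\hat v$ on $G$.

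Third, I would upgrade this to an equality on the whole exterior of the cracks. Both $v$ and $\hat v$ are harmonic, hence real-analytic, on $\R^2\ba\overline{\Sigma}$ and $\R^2\ba\overline{\hat\Sigma}$ respectively, and they coincide on the nonempty open set $G$. Because finitely many disjoint arcs do not disconnect the plane, $\R^2\ba(\overline{\Sigma}\cup\overline{\hat\Sigma})$ is connected, so unique continuation propagates the identity $v=\hat v$ from $G$ to all of $\R^2\ba(\overline{\Sigma}\cup\overline{\hat\Sigma})$. This is precisely the situation \eqref{v=hatv} of the homogeneous proof, and from here the remaining maximum-principle argument of Theorem~\ref{thm-unique} applies verbatim: if $\Gamma:=\hat\Sigma\ba\overline{\Sigma}\neq\emptyset$ then $v=\hat v=0$ on $\Gamma\subset\R^2\ba\overline{\Sigma}$, contradicting $v>0$ in $\R^2\ba\overline{\Sigma}$; hence $\hat\Sigma\subset\overline{\Sigma}$, and by symmetry $\Sigma=\hat\Sigma$.

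The main obstacle is the very first reduction: unlike the homogeneous problem, the two total fields are only guaranteed to agree \emph{outside} the (possibly distinct and differently supported) media, that is, on $G$ rather than on the full crack complement. The decisive point that rescues the argument is that the leading low-frequency coefficient $v$ is completely insensitive to $V$ and is harmonic; this is what allows the unique-continuation step to bridge the region occupied by the media and recover the clean identity $v=\hat v$ on $\R^2\ba(\overline{\Sigma}\cup\overline{\hat\Sigma})$, after which the presence of the medium plays no further role.
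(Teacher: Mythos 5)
Your overall route is the same as the paper's: the paper proves Theorem~\ref{thm-unique-V} essentially in one sentence, by observing that the $1/\ln k$ coefficient of the low-frequency expansion in the inhomogeneous case is exactly the medium-independent harmonic function $v$ of \eqref{eq-uniq-u}, so that the argument of Theorem~\ref{thm-unique} carries over. Your first two steps (Rellich's lemma and analytic continuation on the unbounded component of the complement of $\ov{\Omega}\cup\ov{\Sigma}\cup\ov{\hat\Sigma}$, then extraction of the $1/\ln k$ coefficient to get $v=\hat v$ there) make this reduction precise and are fine, as is your closing observation that $v$ does not see $V$.

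The gap is in your third step. You assert that $\R^2\ba(\ov{\Sigma}\cup\ov{\hat\Sigma})$ is connected because ``finitely many disjoint arcs do not disconnect the plane.'' But only cracks \emph{within the same collection} are assumed to have positive mutual distance; a crack of $\Sigma$ and a crack of $\hat\Sigma$ are two competing candidates and may touch or cross, so their union can enclose a bounded region (e.g.\ $\Sigma$ the upper and $\hat\Sigma$ the lower half of a circle). Hence the complement need not be connected, unique continuation from $G$ need not reach every point of $\hat\Sigma\ba\ov{\Sigma}$, and the inclusion $\Gamma\subset\pa G$ you rely on at the end does not follow. This is exactly the contingency to which the proof of Theorem~\ref{thm-unique} devotes a separate paragraph: if $G$ were not all of $\R^2\ba(\ov{\Sigma}\cup\ov{\hat\Sigma})$, then $v$ would solve an interior Dirichlet problem with zero boundary data on $D=\mathring{G^c}\ba\ov{\Sigma}$, hence $v\equiv 0$ by the maximum principle and unique continuation, contradicting the logarithmic growth $v(x)=\frac{1}{2\pi}\ln|x|+O(1)$. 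You need to reinstate that argument (it transfers verbatim, precisely because $v$ is independent of $V$ and harmonic across $\Omega$); once $G$ is identified with the full crack complement, the remainder of your proof is correct.
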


\bibliographystyle{SIAM}

\end{document}